\newtheorem{theorem}{Theorem}[section]
\newtheorem{lemma}[theorem]{Lemma}
\newtheorem{proposition}[theorem]{Proposition}
\newtheorem{MainTheorem}{Theorem}
\theoremstyle{definition}
\theoremstyle{remark}
\numberwithin{equation}{section}
\newcommand{\PGL}{{\mathrm {PGL}}}
\newcommand{\SL}{{\mathrm {SL}}}
\newcommand{\PSL}{{\mathrm {PSL}}}
\newcommand{\SU}{{\mathrm {SU}}}
\newcommand{\PSU}{{\mathrm {PSU}}}
\newcommand{\Sp}{{\mathrm {Sp}}}
\newcommand{\PSp}{{\mathrm {PSp}}}
\newcommand{\Ker}{\operatorname{Ker}}
\newcommand{\Aut}{{\mathrm {Aut}}}
\newcommand{\Out}{{\mathrm {Out}}}
\newcommand{\Irr}{{\mathrm {Irr}}}
\newcommand{\St}{{\mathsf {St}}}
\newcommand{\Stab}{{\mathrm {Stab}}}
\newcommand{\CC}{{\mathbb C}}
\newcommand{\AAA}{{\mathbb A}}
\newcommand{\FF}{{\mathbb F}}
\newcommand{\HC}{\mathcal{H}}
\newcommand{\OC}{\mathcal{O}}
\newcommand{\GC}{\mathcal{G}}
\newcommand{\ta}{\hspace{0.5mm}^{2}\hspace*{-0.2mm}}
\newcommand{\p}{^{\prime}}
\newcommand{\acd}{\mathsf{acd}}
\newcommand{\bC}{{\mathbf{C}}}
\newcommand{\bO}{{\mathbf{O}}}
\newcommand{\bN}{{\mathbf{N}}}
\newcommand{\bZ}{{\mathbf{Z}}}
\newcommand{\Al}{\textup{\textsf{A}}}
\newcommand{\Sy}{\textup{\textsf{S}}}
\begin{document}

\title[The average character degree and the It\^{o}-Michler theorem]
{The average character degree\\ and an improvement of the It\^{o}-Michler theorem}

\author{Nguyen Ngoc Hung}
\address{Department of Mathematics, The University of Akron, Akron,
OH 44325, USA} \email{hungnguyen@uakron.edu}

\author{Pham Huu Tiep}
\address{Department of Mathematics, Rutgers University, Piscataway, NJ 08854, USA}
\email{tiep@math.rutgers.edu}

\thanks{The second author gratefully acknowledges the support of the NSF (grant DMS-1840702).}
\thanks{The paper is partially based upon work supported by the NSF under grant DMS-1440140 while the authors were in residence at MSRI (Berkeley, CA), during the Spring 2018
semester. We thank the Institute for the hospitality and support. We
also thank Jay Taylor for an interesting discussion on the
extendibility property of unipotent characters of finite groups of
Lie type.}

\subjclass[2010]{Primary 20C15, 20D10, 20D05}

\keywords{finite groups, simple groups, character degrees, normal
subgroups, Sylow subgroups, It\^{o}-Michler theorem}

\date{\today}

\begin{abstract} The classical
It\^{o}-Michler theorem states that the degree of every ordinary
irreducible character of a finite group $G$ is coprime to a prime
$p$ if and only if the Sylow $p$-subgroups of $G$ are abelian and
normal. In an earlier paper \cite{Hung-Tiep}, we used the notion of
average character degree to prove an improvement of this theorem
for the prime $p=2$. In this follow-up paper, we obtain a full
improvement for all primes.
\end{abstract}

\maketitle


\section{Introduction}

The classical It\^{o}-Michler theorem \cite{Ito,Michler} on
character degrees of finite groups asserts that the degree of every
ordinary irreducible character of a finite group $G$ is coprime to a
prime $p$ if and only if the Sylow $p$-subgroups of $G$ are abelian
and normal. Using the notion of the so-called \emph{average
character degree} introduced by Isaacs, Loukaki, and Moret\'{o} in
\cite{Isaacs-Loukaki-Moreto}, we proposed in \cite{Hung-Tiep} a new
direction, described below, to improve this theorem.

As usual, we use $\Irr(G)$ to denote the set of all ordinary
irreducible characters of $G$. Following \cite{Hung-Tiep}, let
\[
\Irr_p(G):=\{\chi\in\Irr(G)\mid \chi(1)=1 \text { or } p\mid
\chi(1)\}\] and \[\acd_p(G):=\frac{\sum_{\chi\in\Irr_p(G)}
\chi(1)}{|\Irr_p(G)|}
\]
so that $\acd_p(G)$ is the average degree of linear characters and
irreducible characters of $G$ with degree divisible by $p$. The
It\^{o}-Michler theorem is then equivalent to the statement that $\acd_p(G)=1$ if and
only if the Sylow $p$-subgroups of $G$ are abelian and normal.

We have observed that the normality of the Sylow $p$-subgroups of
$G$ can still be achieved when $\acd_p(G)$ is close to $1$. In
particular, we showed in \cite{Hung-Tiep} that if $\acd_2(G)<4/3$
then $G$ has a normal Sylow $2$-subgroup. On the other hand, the
Sylow $p$-subgroups are not abelian no matter how $\acd_p(G)$ is
close to $1$, as shown by the extraspecial $p$-groups.

The main result of this paper is a generalization of the
aforementioned result to all primes. For any given prime $p$, a key numerical invariant in this
result is the integer $\ell(p)$, which is defined to be the smallest positive integer $\ell$ such that
$\ell p+1$ is a prime power. Such an integer exists by Dirichlet's theorem.
Clearly, $\ell(2) = \ell(3) = \ell(7)=1$; moreover, $\ell(p) = 1$ if and only if
$p = 2$ or $p$ is a Mersenne prime. On the other hand, $\ell(5) = \ell(11) = \ell(13) = 2$, and
$\ell(17) = 6$.

\begin{MainTheorem}\label{theorem-main-1}
Let $p$ be a prime and let $G$ be a finite group with
$$\acd_p(G)< \frac{2\ell(p)p}{\ell(p)p+1}.$$
Then $G$ has a normal Sylow $p$-subgroup.
\end{MainTheorem}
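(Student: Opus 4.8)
The plan is to argue by minimal counterexample together with the classification of finite simple groups, guided by the fact that the bound is sharp and is saturated by a single configuration. Write $q = \ell(p)p+1$, so that the target threshold is $c := 2\ell(p)p/(\ell(p)p+1) = 2(q-1)/q$. The extremal example is the Frobenius group $G_0 = \FQ \rtimes \FQ^\times$ of order $q(q-1)$: here $G_0' = \FQ$, so $G_0$ has exactly $q-1$ linear characters, and Clifford theory over the regular action of $\FQ^\times$ on $\Irr(\FQ)\setminus\{1\}$ produces a single further irreducible character, of degree $q-1$, which is divisible by $p$ since $p \mid q-1$. Thus $\Irr_p(G_0)$ consists of $q-1$ linear characters and one character of degree $q-1$, whence $\acd_p(G_0) = 2(q-1)/q = c$, while the Sylow $p$-subgroup lies in the non-normal complement $\FQ^\times$ and so is not normal. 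Since $2(x-1)/x$ is increasing, the minimality of $q$ among prime powers congruent to $1$ modulo $p$ is exactly what makes this the binding constraint, and it accounts for the appearance of $\ell(p)$.

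Let $G$ be a counterexample of least order. First I would record the bookkeeping: rewriting the hypothesis as $\sum_{\chi\in\Irr_p(G)}(\chi(1)-c) < 0$, each linear character contributes $1-c<0$, while each character of degree divisible by $p$ contributes $\chi(1)-c \geq p-c > 0$ (note $c = 2-2/q < 2 \leq p$). Thus the hypothesis forces the $|G:G'|$ linear characters to outweigh, in a precise quantitative sense, the total excess coming from the $p$-divisible characters. Next I would carry out the standard reductions: establishing the behaviour of $\acd_p$ under quotients by $p'$-groups, and using that normality of a Sylow $p$-subgroup passes across such quotients in both directions, I would reduce to $\bO_{p'}(G)=1$ and then analyse a minimal normal subgroup $N$, taken to be the unique one with $\bC_G(N)$ understood. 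Then $N$ is either an elementary abelian $r$-group with $r\neq p$ (the affine case) or a direct product of non-abelian simple groups (the non-abelian case).

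In the non-abelian case, where $\soc(G)$ is a product of copies of a simple group $S$, I expect $\acd_p(G)$ to exceed $c$ by a wide margin: almost simple groups have very small abelianisation, hence few linear characters, while the classification supplies an abundance of irreducible characters of large degree divisible by $p$ (for $p$ the defining characteristic one uses the Steinberg character, of $p$-power degree; otherwise unipotent and semisimple characters whose degrees are visibly divisible by $p$). The role of extendibility, in particular of unipotent characters of groups of Lie type to their automorphism groups, is to guarantee that these high-degree $p$-divisible characters of $S$ survive as $p$-divisible characters of $G$, so that the estimate is uniform; the bookkeeping inequality then rules out this case.

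The crux, and the expected main obstacle, is the affine case: after the reductions $G$ has a normal elementary abelian subgroup $V=\FF_r^d$ with $r\neq p$, $\bC_G(V)=V$, and $H:=G/V$ acting faithfully and irreducibly. Since the Sylow $p$-subgroup is not normal, some $p$-element acts nontrivially on $V$ (otherwise $P\leq\bC_G(V)=V$, impossible), and I would study the irreducible characters over nontrivial linear characters of $V$ by Clifford theory: their degrees are the orbit lengths $|H:H_\lambda|$ times contributions from the inertia groups, and $p$-divisibility forces an $H$-orbit on $\Irr(V)\setminus\{1\}$ of length divisible by $p$. The heart of the matter is to show that a nontrivially acting $p$-element forces $|V|\geq q$ and a $p$-divisible character of degree at least $q-1$, with too few linear characters to counterbalance it; here $|V|-1$ is a multiple of $p$ that is at least $\ell(p)p$ precisely because $q=\ell(p)p+1$ is the smallest prime power congruent to $1$ modulo $p$. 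The difficulty is that $H$ need be neither regular nor cyclic, so one must show that every deviation from the configuration $G_0$ strictly increases $\acd_p$, by producing additional or higher-degree $p$-divisible characters or by lowering the linear-character count; carrying this comparison through uniformly, together with the induction disposing of several chief factors, yields $\acd_p(G)\geq c$, contradicting the choice of $G$.
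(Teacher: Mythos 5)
Your identification of the extremal example $\FQ\rtimes\FQ^\times$ with $q=\ell(p)p+1$ and of the affine case as the crux is exactly right, but the proposal has two genuine gaps. First, the reduction you propose is based on a false claim: normality of a Sylow $p$-subgroup does \emph{not} descend from $G/K$ to $G$ when $K$ is a normal $p'$-subgroup (take $G=\Al_4$, $p=3$, $K=\bO_{3'}(G)=C_2^2$: the quotient $C_3$ is its own normal Sylow $3$-subgroup, but $\Al_4$ has no normal Sylow $3$-subgroup). So you cannot reduce to $\bO_{p'}(G)=1$ this way, and in fact the entire difficulty of the theorem lives precisely in this failure. The paper's route is different: it passes to $G/N$ for $N$ a minimal normal subgroup inside $G'\cap\bO^{p'}(G)$ (the containment $N\leq G'$ is needed even to control $\acd_p$ under the quotient, via Lemma~\ref{L-subset} --- another point your sketch glosses over), obtains a normal $P_1/N$ with $P_1=N\rtimes P$ by induction, and then must still decide whether $[N,P_1]=1$; this is settled by analyzing $H$-orbits on $\Irr(N)\smallsetminus\{1_N\}$ for a complement $H$ obtained from a Frattini/maximal-subgroup argument.

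Second, the quantitative heart of the argument --- what you correctly label ``the difficulty'' --- is left entirely undone. The paper's Lemma~\ref{lemma-orbit} proves, by a delicate Clifford--Gallagher bookkeeping over all inertia subgroups, that some orbit $\OC$ of size $1$ or divisible by $p$ satisfies $|\OC|(f+1)/(|\OC|+f)\leq\acd_p(G)$, where $f$ counts such orbits; combined with the dichotomy ``some non-principal $\lambda$ is $P_1$-invariant (forcing $[N,P_1]=1$) or every orbit has size divisible by $p$,'' this yields $f=1$ and $|\OC|=r^m-1\geq\ell(p)p$, giving the exact bound $b_p$. Your proposal asserts that ``every deviation from $G_0$ strictly increases $\acd_p$'' but supplies no mechanism for proving it; without an inequality of the above type the argument does not close. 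Similarly, the non-abelian socle case is not a ``wide margin'' estimate one can wave at: in the paper it is Theorem~\ref{theorem-main-3} ($p$-solvability under $\acd_p(G)<a_p$), whose proof occupies Sections 3 and 5 and rests on the CFSG-based extendibility results Theorems~\ref{theorem-simple-groups-key1} and~\ref{theorem-simple-groups-key2}. Your instinct about which characters to use (Steinberg, unipotent, semisimple) and why extendibility matters is correct, but as written the proposal is a plan with the two hardest steps missing.
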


The bound in Theorem~\ref{theorem-main-1} is best possible. Indeed, by the definition of $\ell(p)$, there is a prime
$r$ and an integer $m \geq 1$ such that $\ell(p)p = r^m-1$. Note that the cyclic group $C_{r^m-1}$ admits a faithful action on
the elementary abelian $r$-group $C^m_r$, leading to a semi-direct product $G = C_r^m \rtimes C_{r^m-1}$ with $\acd_p(G) = 2\ell(p)p/(\ell(p)+1)$, and
$G$ has a non-normal Sylow $p$-subgroup.


Of course if a finite group $G$ has a normal Sylow $p$-subgroup,
then $G$ is $p$-solvable. In fact, we have to establish
$p$-solvability before proving normality of Sylow $p$-subgroups.
The bound of $\acd_p(G)$ for $p$-solvability in the following
theorem is indeed best possible, shown by $\Al_5$ for $p=2,3$ and
$\PSL_2(p)$ for $p\geq 5$. Recall that $\bO^{p'}(G)$ is the minimal
normal subgroup of $G$ whose quotient is a $p'$-group.

\begin{MainTheorem}\label{theorem-main-3}
Let $p$ be a prime and set $a_2:=5/2$, $a_3:=7/3$ and $a_p:=(p+1)/2$
if $p\geq 5$. Let $G$ be a finite group such that $\acd_p(G)<a_p$.
Then $\bO^{p'}(G)$ is solvable and, in particular, $G$ is
$p$-solvable.
\end{MainTheorem}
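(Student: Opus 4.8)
The plan is to prove Theorem~B by a reduction-to-simple-groups argument, establishing the contrapositive: if $\bO^{p'}(G)$ is nonsolvable, then $\acd_p(G) \geq a_p$. The first step is to understand how $\acd_p$ behaves under the relevant group-theoretic operations. The key structural observation is that $\acd_p$ controls the presence of ``large'' character degrees: since $\Irr_p(G)$ always contains every linear character, the value of $\acd_p(G)$ being small forces the $p$-divisible irreducible degrees to be both few in number and small in size. I would first record elementary lemmas showing that $\acd_p$ does not increase too wildly when passing to quotients and normal subgroups. In particular, if $N \trianglelefteq G$, linear characters of $G/N$ inflate to linear characters of $G$, and irreducible characters of $G/N$ of $p$-divisible degree inflate likewise, so one expects a monotonicity-type bound $\acd_p(G) \geq \acd_p(G/N)$ or a comparable inequality allowing us to replace $G$ by a convenient quotient.

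The heart of the argument is the nonsolvable case, where I would reduce to a \emph{nonabelian simple group} $S$ (or a quasisimple/almost-simple configuration) appearing as a composition factor of $\bO^{p'}(G)$. The strategy is: if $\bO^{p'}(G)$ is nonsolvable, it has a nonabelian simple composition factor $S$ with $p \mid |S|$ (one must check $p$ divides the order, which holds because $\bO^{p'}(G)$ has no nontrivial $p'$-quotient). One then wants to show that any such $S$ forces $\acd_p \geq a_p$. Concretely, I would prove a statement of the form: \emph{for every nonabelian simple group $S$ with $p \mid |S|$, one has} $\acd_p(S) \geq a_p$, \emph{and moreover this survives the passage from $S$ back up to $G$}. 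The sharp examples $\Al_5$ (for $p = 2, 3$) and $\PSL_2(p)$ (for $p \geq 5$) indicate that the bound $a_p = (p+1)/2$ for $p \geq 5$ is calibrated precisely against $\PSL_2(p)$, whose $p$-divisible irreducible characters are the Steinberg character of degree $p$ and the characters of degree $p \pm 1$, giving an average very close to $(p+1)/2$. This tells us the minimizing simple group is $\PSL_2(p)$ itself, and the proof must verify that all other simple groups give strictly larger $\acd_p$.

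For this verification I would use the Classification of Finite Simple Groups and split into the usual families: alternating groups, groups of Lie type in the defining characteristic and in cross characteristic, and the sporadic groups. The main tool is to exhibit, for each simple $S$, enough irreducible characters of $p$-divisible degree that are \emph{large} relative to the number of linear (trivial) characters dragging the average down. For groups of Lie type the Steinberg character of degree $|S|_p$ is always available and already of large $p$-power degree, and one supplements it with unipotent or semisimple characters of known degrees; the discussion of extendibility of unipotent characters acknowledged in the paper's footnote is exactly what is needed to push such lower bounds from $S$ up through almost-simple extensions inside $G$. The genuinely hard step, and the main obstacle, is controlling the transition from the simple composition factor $S$ to the whole group $G$: one must ensure that $\Irr_p(G)$ inherits enough high-degree characters from $S$ (via a chief-factor/Clifford-theory analysis, with the footnote's extendibility results handling the non-split or twisted cases) without introducing so many extra linear characters that the average is diluted below $a_p$. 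Handling the families where $p$ is small relative to the Lie rank --- where the Steinberg degree alone may not suffice and one must carefully count small-degree $p$-divisible characters --- is where the bulk of the casework and the real difficulty will lie.
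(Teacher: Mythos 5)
Your overall architecture (contrapositive, minimal counterexample, monotonicity of $\acd_p$ under passage to quotients $G/N$ with $N\leq G'$, CFSG-based production of extendible characters on a minimal normal subgroup) matches the paper's, but there are two genuine gaps. The first is the claim that a nonabelian composition factor $S$ of $\bO^{p'}(G)$ must satisfy $p\mid |S|$ ``because $\bO^{p'}(G)$ has no nontrivial $p'$-quotient.'' This is false: take $N=\Al_5^{\,7}$ with $C_7$ permuting the factors cyclically and $p=7$; then $\bO^{7'}(G)=G$ is nonsolvable but every nonabelian composition factor is a $7'$-group. This is not a boundary case one can wave away --- it is the entire second half of the paper's proof. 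The paper first proves $p$-solvability (Theorem 3.4), which forces the relevant simple group $S$ to be a $p'$-group, and then the $p$-divisibility of the degree $d=\mu(1)[G:I_G(\mu)]$ must come from the index $[G:I_G(\mu)]$ rather than from $\mu(1)$. Arranging this requires two separate mechanisms depending on whether $p$ divides the order of the permutation action on the simple factors (Proposition 2.4, via the Casolo--Dolfi orbit lemma) or the order of the induced outer automorphisms (Theorem 3.2, producing $\theta\in\Irr(S)$ with $p\nmid |I_{\Aut(S)}(\theta)|$). Your proposal contains no analogue of either.

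The second gap is the transfer from the simple group to $G$. You propose to show $\acd_p(S)\geq a_p$ for each simple $S$ with $p\mid |S|$ and then argue that ``this survives the passage back up to $G$,'' acknowledging the dilution by linear characters of $G$ as the obstacle but offering no mechanism to overcome it. The paper does not estimate $\acd_p(S)$ at all; instead it needs only \emph{one} character $\mu\in\Irr(N)$ of $p$-divisible degree extendible to $I_G(\mu)$, and the decisive tool is the inequality $n_1(G)\leq n_d(G)[G:I_G(\mu)]$ (Lemma 2.1), which caps the number of linear characters of the whole group by the number of characters of the single degree $d$. Combined with $\mu(1)\geq p$ this yields $\acd_p(G)\geq (p+1)/2$ by an elementary two-degree average. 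Without this lemma (or a substitute), knowing $\acd_p(S)$ for simple $S$ gives no control over $\acd_p(G)$. (Minor point: for $\PSL_2(p)$ the set $\Irr_p$ consists of the trivial and Steinberg characters only --- the degrees $p\pm 1$ are not divisible by $p$ --- so the extremal average is $(1+p)/2$, which is how $a_p$ is calibrated.)
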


We need to use the classification of finite simple groups to prove
Theorem~\ref{theorem-main-3}. In particular, the classification is
used to show the existence of an extendible irreducible character of
degree divisible by $p$ in non-abelian simple groups (see
Theorem~\ref{theorem-simple-groups-key1}). We believe that this
extendibility-divisibility result will be useful in other purposes
as well.

A similar statement to Theorem~\ref{theorem-main-3} may
still be true if we restrict our attention to only real-valued
characters or even strongly real characters, and this would
significantly improve the results in \cite{Marinelli-Tiep,Tiep}.
However, to prove it, one would first need to prove a real, respectively strongly
real, version of Theorem~\ref{theorem-simple-groups-key1}, which seems
very difficult to prove at the moment.

In view of Theorem~\ref{theorem-main-1},
it is reasonable to conjecture that the index $[G:\bN_G(P)|$ is
bounded in terms of $\acd_p(G)$, where $P$ is a Syllow $p$-subgroup
of $G$. Even the weaker statement that the number of non-abelian
composition factors of $G$ of order divisible by $p$ is bounded in
terms of $\acd_p(G)$ seems highly nontrivial to prove.


\section{Preliminaries}

Throughout the paper, let $n_d(G)$ denote the number of irreducible characters of degree
$d$ of a finite group $G$. The following lemma controls the number
$n_1(G)$ of linear characters in a special situation.

\begin{lemma}\label{lemma-n1-n2}
Let $G$ be a finite group with a non-abelian minimal normal subgroup
$N$. Assume that there is some $\mu\in\Irr(N)$ such that $\mu$ is
extendible to the inertia subgroup $I_G(\mu)$. Then $n_1(G)\leq
n_d(G)[G:I_G(\mu)]$ where
$d:=\mu(1)[G:I_G(\mu)]$. 
\end{lemma}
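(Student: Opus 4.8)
The plan is to manufacture, from the single extendible character $\mu$, enough irreducible characters of $G$ of degree $d$ to dominate the linear characters of $G$, and then to compare the two counts using Clifford theory. Write $T := I_G(\mu)$. Since conjugation by an element of $N$ is an inner automorphism of $N$ and hence fixes every character of $N$, we have $N \le T$, so the quotient $T/N$ is defined. I also record at the outset that, because $N$ is a non-abelian minimal normal subgroup, $N$ is perfect; thus $N = N' \le G'$, every linear character of $G$ is trivial on $N$, and consequently $n_1(G) = n_1(G/N)$.

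First I would produce the characters of degree $d$. Let $\hat\mu \in \Irr(T)$ be an extension of $\mu$, which exists by hypothesis. By Gallagher's theorem the map $\beta \mapsto \hat\mu\beta$ is a bijection from $\Irr(T/N)$ onto $\Irr(T\mid\mu)$, with $(\hat\mu\beta)(1) = \mu(1)\beta(1)$. Restricting attention to the $n_1(T/N)$ linear characters $\beta$ of $T/N$ yields $n_1(T/N)$ distinct characters $\hat\mu\beta \in \Irr(T)$, each of degree $\mu(1)$. Applying the Clifford correspondence $\Ind_T^G \colon \Irr(T\mid\mu)\to\Irr(G\mid\mu)$, which is a bijection and in particular injective, these induce to $n_1(T/N)$ distinct irreducible characters of $G$, each of degree $[G:T]\mu(1) = d$. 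Hence
\[
n_d(G) \ \ge\ n_1(T/N).
\]

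Next I would bound $n_1(G)$ in terms of $n_1(T/N)$ by a purely group-theoretic index argument. Set $\bar G := G/N$ and $\bar T := T/N$, so that $[\bar G:\bar T] = [G:T]$. Restriction defines a homomorphism $\rho$ from the group $\mathrm{Lin}(\bar G)$ of linear characters of $\bar G$ to $\mathrm{Lin}(\bar T)$. Its image lies in $\mathrm{Lin}(\bar T)$, so $|\Im\rho| \le n_1(\bar T) = n_1(T/N)$; its kernel consists of the linear characters of $\bar G$ whose kernel contains $\bar T$, equivalently contains the normal closure $K$ of $\bar T$ in $\bar G$, so that $|\ker\rho| = n_1(\bar G/K) \le [\bar G:K] \le [\bar G:\bar T] = [G:T]$. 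Multiplying these two estimates and using $n_1(G)=n_1(G/N)$ gives
\[
n_1(G) \ =\ |\ker\rho|\cdot|\Im\rho| \ \le\ [G:T]\, n_1(T/N).
\]
Combining this with the previous display yields $n_1(G) \le n_d(G)\,[G:I_G(\mu)]$, which is the assertion.

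The individual computations here are routine; the one point that genuinely needs care is the counting behind $n_d(G) \ge n_1(T/N)$, namely that distinct linear $\beta \in \Irr(T/N)$ really do produce distinct irreducible characters of $G$ all of the same degree $d$. This rests on combining the injectivity built into Gallagher's theorem with the injectivity of the Clifford correspondence, and it is precisely this step that lets a single extendible $\mu$ control the full count $n_1(G)$.
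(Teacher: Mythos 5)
Your proof is correct. The paper itself only cites \cite[Proposition~2.3(i)]{Hung-Tiep} here, and your argument --- using that $N$ is perfect to get $n_1(G)=n_1(G/N)$, Gallagher's theorem plus the Clifford correspondence to produce $n_1(T/N)$ distinct irreducible characters of $G$ of degree $d$, and the index estimate $n_1(G/N)\leq [G:T]\,n_1(T/N)$ via restriction of linear characters --- is precisely the standard proof of that cited proposition, with all the delicate points (injectivity of both correspondences, and why the minimal normal subgroup must be non-abelian) handled correctly.
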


\begin{proof}
This is Proposition~2.3(i) of \cite{Hung-Tiep}.
\end{proof}

The following lemma allows us to focus on special subsets of $\Irr_p(G)$ in a number of situations:

\begin{lemma}\label{L-subset}
Let $G$ be a finite group, $p$ be a prime and let $\Irr^*(G)$ be a subset of $\Irr_p(G)$ that contains all linear characters of $G$.
Suppose that $\acd_p(G) \leq p$. Then
$$\acd_p(G) \geq \frac{\sum_{\chi \in \Irr^*(G)}\chi(1)}{|\Irr^*(G)|}.$$
In particular, if $N \lhd G$, $N \leq G'$, and $\acd_p(G) \leq p$, then $\acd_p(G/N) \leq \acd_p(G)$.
\end{lemma}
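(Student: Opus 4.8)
The plan is to reduce the first assertion to a one-element-at-a-time deletion argument on the multiset of degrees. Write $\mathcal{R} := \Irr_p(G) \setminus \Irr^*(G)$. Since $\Irr^*(G)$ already contains every linear character of $G$, each $\chi \in \mathcal{R}$ lies in $\Irr_p(G)$ yet is non-linear, so by the definition of $\Irr_p(G)$ its degree is divisible by $p$ and hence $\chi(1) \geq p$. The elementary fact I would isolate first is this: if a nonempty finite multiset of positive numbers has average $\mu$ and we delete from it an element of value $x \geq \mu$ while leaving at least one element behind, then the new average is at most $\mu$. This is immediate from the equivalence $(S-x)/(N-1) \leq S/N \iff x \geq S/N$, where $S$ and $N$ denote the current sum and cardinality.

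Next I would order the members of $\mathcal{R}$ arbitrarily and delete their degrees from the degree-multiset of $\Irr_p(G)$ one at a time, stopping at $\Irr^*(G)$. The starting average is $\acd_p(G) \leq p$ by hypothesis, and the process never empties the multiset because $\Irr^*(G)$ contains the trivial character. At each step the deleted degree is $\geq p$ while the current average is $\leq p$; thus $x \geq p \geq \mu$ and, by the observation above, the average does not increase. After all of $\mathcal{R}$ is removed we are left with precisely $\Irr^*(G)$, whose average degree is therefore at most $\acd_p(G)$, which is the displayed inequality. The only real point requiring care — and the main (if modest) obstacle — is propagating the invariant ``current average $\leq p$'' through every deletion: it holds at the outset exactly by the hypothesis $\acd_p(G) \leq p$, and it is preserved because the averages are non-increasing, so the condition $x \geq \mu$ needed at each step is never in doubt.

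For the ``in particular'' statement I would take $\Irr^*(G)$ to be the set of inflations to $G$ of the characters in $\Irr_p(G/N)$, that is, those $\chi \in \Irr_p(G)$ with $N \leq \ker \chi$. Because inflation preserves character degrees, this $\Irr^*(G)$ is a subset of $\Irr_p(G)$ and the inflation map gives a degree-preserving bijection $\Irr_p(G/N) \to \Irr^*(G)$, so the right-hand average in the lemma equals $\acd_p(G/N)$. The hypothesis $N \leq G'$ is what makes the set-up legitimate: every linear character $\lambda$ of $G$ satisfies $N \leq G' \leq \ker \lambda$ and hence factors through $G/N$, so $\Irr^*(G)$ contains all linear characters of $G$. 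The first part then applies verbatim and yields $\acd_p(G/N) \leq \acd_p(G)$.
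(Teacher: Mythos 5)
Your proof is correct and rests on the same observation as the paper's: every character in $\Irr_p(G)\smallsetminus\Irr^*(G)$ has degree at least $p$ while the overall average is at most $p$, so removing those characters cannot increase the average. The paper executes this in a single step via the inequality $A/B \geq (A-C)/(B-D)$ when $A/B \leq p \leq C/D$, whereas you delete one character at a time; this is only a difference in bookkeeping, and your handling of the ``in particular'' statement via inflation and $N \leq G' \leq \ker\lambda$ matches the paper's as well.
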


\begin{proof}
Let
$$A:=\sum_{\chi \in \Irr_p(G)}\chi(1),~B:=|\Irr_p(G)|,~C:= \sum_{\chi \in \Irr_p(G) \smallsetminus \Irr^*(G)}\chi(1),~D:=|\Irr_p(G) \smallsetminus \Irr^*(G)|.$$
By assumption, $A/B = \acd_p(G) \leq p$ and so $A \leq pB$. On the other hand, as $\Irr^*(G)$ contains all linear characters of $G$,
$C \geq pD$. It follows that
$$\acd_p(G) = A/B \geq \frac{A-B}{C-D} = \frac{\sum_{\chi \in \Irr^*(G)}\chi(1)}{|\Irr^*(G)|},$$
proving the first statement. For the second statement, note that the subset $\Irr_p(G/N)$ of $\Irr_p(G)$ contains all linear characters of $G$.
\end{proof}

The next lemma will be used frequently. It is well known, but we
include a proof for completeness.

\begin{lemma}\label{lemma-extension} Let $S$ be a non-abelian simple group and
let $N:=S\times\cdots\times S$, a direct product of $k$ copies of
$S$. Suppose that $\lambda\in\Irr(S)$ is extendible to
$I_{\Aut(S)}(\lambda)$. Then
$\chi:=\lambda\times\cdots\times\lambda\in\Irr(N)$ is extendible to
$I_{\Aut(N)}(\chi)$.
\end{lemma}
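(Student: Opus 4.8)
The plan is to first pin down the group $\Aut(N)$ and the inertia subgroup $I_{\Aut(N)}(\chi)$ explicitly, and then to reduce the extension problem to the classical fact that a ``diagonal'' tensor-power character extends canonically to a wreath product.

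Since $S$ is non-abelian simple, the $k$ direct factors are precisely the minimal normal subgroups of $N$, so every automorphism permutes them and $\Aut(N) \cong \Aut(S) \wr \Sy_k = \Aut(S)^k \rtimes \Sy_k$, with $\Sy_k$ permuting the coordinates. Identifying $\Irr(N)$ with $\Irr(S)^k$ via outer tensor products, an element $(a_1,\dots,a_k;\sigma)$ sends a tuple $(\mu_1,\dots,\mu_k)$ to the tuple obtained from the $\mu_i^{a_i}$ after permuting the coordinates by $\sigma$. Applying this to $\chi=(\lambda,\dots,\lambda)$: the permutation $\sigma$ fixes the constant tuple, so $(a_1,\dots,a_k;\sigma)$ stabilizes $\chi$ exactly when each $a_i$ stabilizes $\lambda$. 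Writing $I:=I_{\Aut(S)}(\lambda)$, this yields $I_{\Aut(N)}(\chi)=I\wr\Sy_k=I^k\rtimes\Sy_k$. Note that $S\leq I$ (inner automorphisms fix every character) and $S\lhd\Aut(S)$, so $N=S^k\lhd I\wr\Sy_k$, as the extension question requires.

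By hypothesis $\lambda$ extends to some $\hat\lambda\in\Irr(I)$; fix a $\CC I$-module $V$ affording $\hat\lambda$, so that $\dim V=\lambda(1)$ and $V|_S$ affords $\lambda$. Then the outer tensor power $V^{\otimes k}$ is a $\CC I^k$-module affording $\hat\lambda\times\cdots\times\hat\lambda$, which extends $\chi$. I then invoke the standard wreath-product construction: let $\Sy_k$ act on $V^{\otimes k}$ by permuting the tensor factors, $\sigma\cdot(v_1\otimes\cdots\otimes v_k)=v_{\sigma^{-1}(1)}\otimes\cdots\otimes v_{\sigma^{-1}(k)}$. One checks that this permutation action together with the coordinatewise $I^k$-action satisfies the defining relations of $I\wr\Sy_k$: the operator realizing $\sigma$ conjugates the action on the $i$-th tensor factor to that on the $\sigma(i)$-th factor, exactly as in the wreath product. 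Hence $V^{\otimes k}$ becomes a genuine $\CC(I\wr\Sy_k)$-module; call its character $\tilde\chi$.

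Finally, $\tilde\chi$ restricts on $N=S^k$ to the character of $(V|_S)^{\otimes k}$, which is $\lambda\times\cdots\times\lambda=\chi$; in particular $\tilde\chi(1)=\chi(1)$. Moreover $\tilde\chi$ is irreducible: any $I\wr\Sy_k$-submodule of $V^{\otimes k}$ is in particular an $N$-submodule, and $V^{\otimes k}|_N$ affords the irreducible character $\chi$, forcing the submodule to be $0$ or everything. Thus $\tilde\chi\in\Irr(I_{\Aut(N)}(\chi))$ extends $\chi$, which is the claim. The only genuinely delicate point is the verification that the permutation action gives a well-defined module over the wreath product (equivalently, that the relevant extension is cohomologically trivial); this is the classical fact underlying the representation theory of wreath products, and the conjugation relation computed above makes it explicit.
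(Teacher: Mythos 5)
Your proof is correct and follows essentially the same route as the paper: identify $I_{\Aut(N)}(\chi)$ as $I_{\Aut(S)}(\lambda)\wr\Sy_k$ and realize the extension on the tensor power $V^{\otimes k}$ of a module affording an extension of $\lambda$. The only (harmless) differences are that you compute the inertia subgroup by a direct pointwise argument rather than the paper's orbit--stabilizer count, and you spell out the irreducibility of the resulting character, which the paper leaves implicit.
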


\begin{proof} Let
$\mathrm{Orb}(\lambda)$ denote the orbit of $\lambda$ in the action
of $\Aut(S)$ on $\Irr(S)$. As $\Aut(N)$ acts transitively on the
simple direct factors of $N$, the orbit of $\chi$ under the action
of $\Aut(N)$ is
\[
\mathrm{Orb}(\chi):=\{\lambda_1\times\lambda_2\times\cdots\times\lambda_k
\in\Irr(N)\mid \lambda_i\in \mathrm{Orb}(\lambda)\}.
\]
By assumption, $\chi$ is invariant under $I_{\Aut(S)}(\lambda)\wr
\Sy_k$. On the other hand, as $\Aut(N)=\Aut(S)\wr \Sy_k$, we have
\[
[\Aut(N):I_{\Aut(S)}(\lambda)\wr
\Sy_k|=[\Aut(S):I_{\Aut(S)}(\lambda)]^n=|\mathrm{Orb}(\lambda)|^n=|\mathrm{Orb}(\chi)|.
\]
Therefore we conclude that
$I_{\Aut(N)}(\chi)=I_{\Aut(S)}(\lambda)\wr \Sy_n$.

Let $\widetilde{\lambda}\in\Irr(I_{\Aut(S)}(\lambda))$ be an
extension of $\lambda$. Suppose that $\widetilde{\lambda}$ is
afforded by a $\CC I_{\Aut(S)}(\lambda)$-module $V$. Then
$I_{\Aut(N)}(\chi)=I_{\Aut(S)}(\lambda)\wr \Sy_k$ acts naturally on
$V^{\otimes k}$ and it follows that the character afforded by the
$\CC I_{\Aut(N)}(\chi)$-module $V^{\otimes k}$ is an extension of
$\chi$, which means that $\chi$ is extendible to $I_{\Aut(N)}(\chi)$.
\end{proof}

The next proposition will be critical in the proof of $p$-solvability of
finite groups with small $\acd_p$. The proof follows the idea of
\cite[Theorem 3.5]{Tiep}.

\begin{proposition}\label{pro-G/K-divisible-by-p}
Let $G$ be a finite group with a minimal normal subgroup $N\cong
S_1\times \cdots\times S_n$, where $n\geq 2$ and the $S_i$'s are all
isomorphic to a non-abelian simple group $S$. Let $K$ be the kernel
of the action of $G$ on $\{S_1,\ldots,S_n\}$. If $|G/K|$ is divisible
by a prime $p$, then there exists $\mu\in\Irr(N)$ such that
$\mu(1)\geq 12$, $\mu$ is extendible to a character of $I_G(\mu)$,
and $[G:I_G(\mu)]$ is divisible by $p$.
\end{proposition}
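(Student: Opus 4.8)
The plan is to produce $\mu$ as a product character $\mu = \lambda_1 \times \cdots \times \lambda_n$ and to read off all three required properties from the permutation action of $G$ on the simple factors. Write $\bar G := G/K$, viewed as a subgroup of $\Sym(n)$ acting on $\{S_1,\dots,S_n\}$. The first observation is that, because $N$ is a \emph{minimal} normal subgroup of $G$, the group $\bar G$ acts transitively on the $n$ factors; together with the hypothesis $p\mid |G/K|$ this makes $\bar G$ a transitive subgroup of $\Sym(n)$ of order divisible by $p$. All of the work takes place inside this transitive permutation group.

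For extendibility and the degree I would fix finitely many nontrivial irreducible characters $\lambda^{(1)},\dots,\lambda^{(r)}$ of $S$, each $\Aut(S)$-invariant and extendible to $\Aut(S)$; such characters exist for every non-abelian simple group by the classification (this is the kind of input supplied by Theorem~\ref{theorem-simple-groups-key1}). I then build $\mu$ by colouring each factor $S_i$ with one of these characters (or the trivial character), so that every colour class is monochromatic. Since each colour is $\Aut(S)$-invariant, $K$ fixes $\mu$, whence $K\leq I_G(\mu)$ and the $G$-orbit of $\mu$ has size exactly that of the $\bar G$-orbit of the colouring, i.e. $[G:I_G(\mu)] = [\bar G : \Stab_{\bar G}(\text{colouring})]$. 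Extendibility of $\mu$ follows from Lemma~\ref{lemma-extension} applied class by class: on each monochromatic class the character is a constant tuple of an $\Aut(S)$-invariant extendible $\lambda^{(j)}$, hence extends to $\Aut(S)\wr\Sym(\text{class})$, and the product over classes is an extension over $I_{\Aut(N)}(\mu)$, and hence, via $G\to\Aut(N)$, over $I_G(\mu)$. The bound $\mu(1)\geq 12$ is arranged by colouring enough factors nontrivially, using that every nontrivial irreducible character of a non-abelian simple group has degree at least $3$ and choosing a colour of degree $\geq 4$ (or a third nontrivial factor) when only two factors are coloured.

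It remains to choose the colouring so that its $\bar G$-orbit has length divisible by $p$, and this is the crux. When $p\mid n$ it is immediate: colour a single factor nontrivially and leave the rest trivial; by transitivity the orbit has length equal to a point-orbit, namely $n$, which is divisible by $p$ (one enlarges the support to meet the degree bound). The genuinely hard case is $p\nmid n$, where every point-orbit — and, more generally, the orbit of any single subset — can be coprime to $p$: already the transitive group $\Sym(3)$ with $p=2$ shows that two colours (i.e. support sets) need not produce any orbit of even length, since there all subset-orbits have odd length $1$ or $3$. Here I would pass to a Sylow $p$-subgroup $Q$ of $\bar G$ and use its orbit structure on $\{1,\dots,n\}$ to engineer a multi-coloured configuration whose stabiliser in $\bar G$ has order prime to $p$ (equivalently, a colouring fixed by no Sylow $p$-subgroup of $\bar G$), which forces $p\mid[\bar G:\Stab_{\bar G}(\text{colouring})]$.

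The main obstacle is precisely this last combinatorial step together with the matching supply of characters: one must guarantee, uniformly over all transitive $\bar G\leq\Sym(n)$ of order divisible by $p$, that a $p$-divisible orbit can be realised using no more distinct colours than the number of suitable extendible characters actually available in $S$. It is exactly at this interface — bounding the number of colours needed against what each simple group provides — that the classification of finite simple groups enters: for the small simple groups with few $\Aut(S)$-invariant extendible characters one must instead exploit the automorphism action to manufacture extra colours from a single $\Aut(S)$-orbit of one extendible $\lambda$ (which preserves extendibility, since such $\mu$ are $\Aut(N)$-conjugate to a constant tuple and Lemma~\ref{lemma-extension} still applies), and then dispose of the remaining low-degree configurations by direct inspection.
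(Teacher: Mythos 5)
Your architecture --- realize $\mu$ as a colouring of the simple factors by characters of $S$ and force $p\mid[G:I_G(\mu)]$ through the orbit of that colouring under $\overline{G}=G/K\leq \Sym(n)$ --- is the same as the paper's, but the step you yourself single out as ``the crux'' is left unproved, and it is precisely the ingredient the argument cannot do without. You need, for an arbitrary faithful action of a group of order divisible by $p$ on a finite set, a configuration whose stabiliser has index divisible by $p$; the paper imports this as a black box from \cite[Lemma 8]{Casolo-Dolfi}: there exist two \emph{disjoint} subsets $\Omega_1,\Omega_2$ of $\{S_1,\dots,S_n\}$ with $p\mid[G:\Stab_G(\Omega_1,\Omega_2)]$, where the stabiliser is that of the \emph{ordered pair}. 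Your own $\Sym(3)$, $p=2$ example shows that no single subset can work in general, so an ordered pair of disjoint subsets is genuinely needed, and producing it is a nontrivial combinatorial lemma, not something that falls out of ``the orbit structure of a Sylow $p$-subgroup'' without further argument. Your easy case $p\mid n$ also has a gap as written: once you enlarge the support beyond a single point to reach degree $12$, the orbit becomes that of a subset, whose length need not be divisible by $p$.

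Your character-theoretic input is likewise misaligned: you ask for several $\Aut(S)$-invariant extendible characters, correctly worry that small simple groups may not supply enough, and propose a patch via $\Aut(S)$-orbits. None of this is necessary. The paper takes from \cite[Theorem 3.2]{Tiep} just \emph{two} characters $\alpha,\beta\in\Irr(S)$ of \emph{distinct} degrees $\geq 3$, each extendible to its inertia subgroup in $\Aut(S)$ (invariance is not required; Lemma~\ref{lemma-extension} only needs extendibility to the inertia subgroup). Setting $\mu=\tilde\alpha\times\tilde\beta\times 1_{N_3}$ with $\tilde\alpha,\tilde\beta$ supported on $\Omega_1,\Omega_2$, the fact that $\alpha,\beta$ are non-principal of distinct degrees forces $I_G(\mu)\leq\Stab_G(\Omega_1,\Omega_2)$ --- any element fixing $\mu$ must separately preserve the set of factors carrying degree $\alpha(1)$ and the set carrying degree $\beta(1)$ --- whence $p\mid[G:I_G(\mu)]$ by the Casolo--Dolfi lemma, and $\mu(1)\geq 3\cdot 4=12$ comes for free. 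The remaining content of the paper's proof, which your sketch elides, is the actual extension of $\mu$ to $I_G(\mu)$: this is done by splitting $N\bC_J(N_1N_2)$ as $N_1N_2\times\bC_J(N_1N_2)$ for $J=I_G(\mu)$ and pushing the extendibility of $\tilde\alpha\otimes\tilde\beta$ through $J/\bC_J(N_1N_2)\hookrightarrow\Aut(N_1)\times\Aut(N_2)$; that part is routine once the two cited inputs are in place.
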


\begin{proof} Consider
the faithful action of $G/K$ on the set $\{S_1,S_2,\ldots,S_n\}$. As
$p||G/K|$, by \cite[Lemma 8]{Casolo-Dolfi}, there are two disjoint
subsets $\Omega_1$ and $\Omega_2$ of $\{S_1,S_2,\ldots,S_n\}$ such that
\begin{equation}\label{index1}
  p\mid [(G/K):\Stab_{G/K}(\Omega_1,\Omega_2)]=[G:\Stab_G(\Omega_1,\Omega_2)].
\end{equation}
Without loss we may assume that $\bC_G(N)=1$. Set
$$N_1 := \prod_{S_i\in \Omega_1} S_i,~N_2 := \prod_{S_i\in \Omega_2} S_i.$$
By \cite[Theorem 3.2]{Tiep}, we can find two irreducible characters,
say $\alpha$ and $\beta$, of $S$ of distinct degrees $\geq 3$, each of which
is extendible to its inertia subgroup in $\Aut(S)$. Consider the
irreducible characters
\[\tilde\alpha:=\alpha\times \cdots \times\alpha\in\Irr(N_1),~\tilde\beta:=\beta\times\cdots\times\beta\in\Irr(N_2).\]
Using Lemma~\ref{lemma-extension}, we deduce that
$\tilde\alpha$ and $\tilde\beta$ are extendible to their
respective inertia subgroups $I_{\Aut(N_1)}(\tilde\alpha)$ and $I_{\Aut(N_2)}(\tilde\beta)$.
It then follows that $\tilde\alpha\otimes \tilde\beta\in \Irr(M)$ is extendible to its inertia subgroup
in $\Aut(N_1) \times \Aut(N_2)$.
Let
$$N_3:=\prod_{S_i \notin \Omega_1\cup \Omega_2}S_i,$$
and consider
\[\mu:=\tilde\alpha\times \tilde\beta\times 1_{N_3}\in \Irr(N).\]
Since $\alpha$ and $\beta$ are non-principal and have distinct degrees, we see that
$$N, N_1N_2 \lhd J \leq \Stab_G(\Omega_1,\Omega_2) = \bN_G(N_1) \cap \bN_G(N_2)$$
for $J := I_G(\mu)$. Observe that
$$N_3 \lhd \bC_J(N_1N_2),~~N_1N_2 \cap \bC_J(N_1N_2) = 1.$$
Hence we can extend $\mu$ canonically to the character $\tilde\mu$ of
$$N\bC_J(N_1N_2) = N_1N_2 \times \bC_J(N_1N_2)$$
that is trivial on $\bC_J(N_1N_2)$ and can be viewed as
the character $\tilde\alpha \otimes \tilde\beta$ of
$$N\bC_J(N_1N_2)/\bC_J(N_1N_2) \cong N_1 \times N_2.$$
Since $N\bC_J(N_1N_2) \lhd J$ and $J/\bC_J(N_1N_2)$ embeds in
$\Aut(N_1) \times \Aut(N_2)$, the aforementioned extendibility of
$\tilde\alpha \otimes \tilde\beta$ implies that $\tilde\mu$ extends to $J = I_G(\mu)$.

Finally, since $J \leq \Stab_G(\Omega_1,\Omega_2)$, we have $p \mid
[G:J]$ by \eqref{index1}. Also, as $\alpha$ and $\beta$ have  distinct degrees at
least $3$, we have $\mu(1)\geq 12$, and the proof is complete.
\end{proof}


\section{$p$-Solvability}

As mentioned in the introduction, we need the classification of finite simple groups to
prove the $p$-solvability. Indeed, the classification is needed in
the next two theorems, whose proofs are deferred to
Section~\ref{section-last}.

\begin{theorem}\label{theorem-simple-groups-key1}
Let $p$ be a prime and let $S$ be a non-abelian simple group of
order divisible by $p$. Then there exists $\theta\in\Irr(S)$ such
that $p\mid \theta(1)$ and $\theta$ is extendible to a character of
$I_{\Aut(S)}(\theta)$.
\end{theorem}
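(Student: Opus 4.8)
The plan is to prove this by reduction to the classification of finite simple groups, treating each family in turn and exhibiting in each case an explicit irreducible character of degree divisible by $p$ that is stable under no nontrivial outer automorphism, or more precisely one whose whole orbit under $\Aut(S)$ behaves well enough to guarantee extendibility to the inertia subgroup. Before diving into the case analysis, I would first record the convenient general principle that if $\theta \in \Irr(S)$ is the unique irreducible character of its degree (or more generally, if $\theta$ lies in an $\Aut(S)$-orbit of size one), then $\theta$ is $\Aut(S)$-invariant, so $I_{\Aut(S)}(\theta) = \Aut(S)$; and if in addition the Schur multiplier obstruction vanishes or $\Out(S)$ is cyclic, $\theta$ extends to all of $\Aut(S)$. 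The \emph{Steinberg character} $\St$ of a group of Lie type is the prototypical example: it is rational-valued, $\Aut(S)$-invariant, has degree a power of the defining characteristic, and is known to extend to $\Aut(S)$.

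First I would dispose of the groups of Lie type in the defining characteristic: here $\St(1)$ is the full power of $p$ dividing $|S|$ when $p$ is the defining prime, so $\theta = \St$ works immediately. This reduces the Lie-type case to primes $p$ \emph{different} from the defining characteristic. For such $p$, I would use Deligne--Lusztig theory: $p$ divides $|S|$ means $p$ divides the order of some maximal torus, and one can locate a unipotent (or, if necessary, a carefully chosen semisimple or more general) character whose degree is divisible by $p$ and which is invariant under field, diagonal, and graph automorphisms. The key tool here is the known \textbf{extendibility of unipotent characters} to the inertia subgroup in $\Aut(S)$ — precisely the point the authors thank Jay Taylor for — which reduces the problem to a purely numerical question: among the unipotent characters stable under the relevant automorphisms, find one with $p \mid \theta(1)$. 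Since unipotent degrees are polynomials in $q$ that factor into cyclotomic polynomials $\Phi_d(q)$, and $p \mid |S|$ forces $p \mid \Phi_d(q)$ for some $d$, I expect that a suitable unipotent character can always be produced, though verifying this uniformly across all the exceptional and classical families is delicate.

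Next I would handle the remaining families separately. For the \textbf{alternating groups} $\Al_n$, I would use the hook-length formula and the branching/parametrization of $\Irr(\Al_n)$ by partitions: for any prime $p \le n$ one can choose a self-conjugate-avoiding partition $\lambda$ whose character has degree divisible by $p$ and whose $\Aut(\Al_n) = \Sy_n$-stability (equivalently $\lambda \ne \lambda'$) is arranged, so that the character extends to $\Sy_n$ and the small outer automorphisms for $n = 6$ are checked by hand. For the \textbf{sporadic groups} (including the Tits group), I would verify the statement by direct inspection of the character tables in the Atlas, locating for each $p \mid |S|$ an $\Aut(S)$-invariant character of degree divisible by $p$ that extends — a finite, if tedious, check. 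The main obstacle, and where the real work concentrates, is the Lie-type non-defining-characteristic case: one must simultaneously control the three types of automorphism and the divisibility of the character degree by $p$, and the danger is that the only characters of degree divisible by $p$ happen to be permuted nontrivially by a field or graph automorphism. Managing the interaction between the cyclotomic factor $\Phi_d(q)$ carrying the prime $p$ and the automorphism action on the corresponding Deligne--Lusztig characters is the crux, and I would expect to invoke the integer $\ell(p)$ and results on $\Phi_d$-tori to guarantee that a stable character with the required divisibility always exists.
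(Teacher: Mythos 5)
Your outline matches the paper's skeleton at the top level (CFSG case analysis; Steinberg character for the defining prime; Malle's extendibility of unipotent characters; Atlas inspection for sporadics), but the step you yourself identify as the crux contains a genuine gap. Your plan for a non-defining prime $p$ is to ``find a unipotent character stable under the relevant automorphisms with $p\mid\theta(1)$,'' and you say you ``expect that a suitable unipotent character can always be produced.'' This is false: for $S=\PSL_n(q)$ every unipotent character degree is $q^{n(\lambda)}\prod_i(q^i-1)/\prod_h(q^h-1)$, in which the cyclotomic factor $\Phi_1(q)=q-1$ occurs with multiplicity zero (there are exactly $n$ hooks), so no prime dividing $q-1$ (and no other $q^i-1$ in a way that helps) divides any unipotent degree; the same failure occurs with $q+1$ for $\PSU_n(q)$ and for non-split even-dimensional orthogonal groups. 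The paper points this out explicitly in its final remarks. Consequently the problem cannot be reduced to ``a purely numerical question'' about unipotent degrees: for these primes the paper must construct \emph{semisimple} characters $\chi_s$ attached to carefully chosen real semisimple elements $s\in G^*$ (regular elements, elements of primitive-prime-divisor order, involutions, etc.) and then prove extendibility of $(\chi_s)_S$ to its inertia subgroup by a separate mechanism (Lemma~\ref{lemma-TiepProp-5.1} for odd characteristic, Lemma~\ref{lemma-even-characteristic} for even characteristic, drawing on \cite{Tiep} and \cite{Marinelli-Tiep}). You gesture at ``a carefully chosen semisimple character if necessary,'' but supplying the extendibility of those non-unipotent characters is the technical heart of the proof and is absent from your plan. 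Your invocation of the integer $\ell(p)$ and $\Phi_d$-tori is also a red herring: $\ell(p)$ plays no role in this theorem.

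Two further differences are worth noting. First, the paper does not hunt for a character prime by prime: for each group $S$ it exhibits one small set of characters (typically two to four), each extendible to its inertia subgroup, whose degrees have product divisible by every prime divisor of $|S|$; this covers all primes $p$ simultaneously and is a substantial organizational simplification over your per-$p$ search. Second, for alternating groups $\Al_n$ ($n\ge 7$) and sporadic groups the paper avoids your hook-length and Atlas computations entirely: since $\Out(S)$ is cyclic there, the It\^o--Michler theorem supplies a character of degree divisible by $p$, and \cite[Corollary 11.22]{Isaacs1} gives extendibility over the cyclic quotient $I_{\Aut(S)}(\theta)/S$ for free. Your hook-length route for $\Al_n$ would require a nontrivial existence argument (a non-self-conjugate partition with $p\mid\chi^\lambda(1)$ for every $p\le n$) that you do not supply.
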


\begin{theorem}\label{theorem-simple-groups-key2}
Let $p$ be a prime and let $S$ be a non-abelian simple group of
order not divisible by $p$. Then there exists a non-principal
character $\theta\in\Irr(S)$ such that $\theta$ is extendible to a
character of $I_{\Aut(S)}(\theta)$ and $p\nmid
|I_{\Aut(S)}(\theta)|$.
\end{theorem}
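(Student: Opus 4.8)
The plan is to split according to whether $p$ divides $|\Out(S)|$. Since $\Inn(S) \cong S$ is contained in $I_{\Aut(S)}(\theta)$ for every $\theta \in \Irr(S)$, and $p \nmid |S|$ by hypothesis, the condition $p \nmid |I_{\Aut(S)}(\theta)|$ is equivalent to $p \nmid |I_{\Out(S)}(\theta)|$, where $I_{\Out(S)}(\theta)$ is the stabilizer of $\theta$ in $\Out(S) = \Aut(S)/\Inn(S)$. If $p \nmid |\Out(S)|$, then $p \nmid |\Aut(S)|$, so this condition holds for \emph{every} $\theta$, and it suffices to produce a single non-principal irreducible character extending to its inertia group. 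When $S$ is of Lie type I would take $\theta$ to be the Steinberg character, which is non-principal, invariant under all of $\Aut(S)$, and extends to $\Aut(S)$; when $S = \Al_n$ I would take the restriction of the degree-$(n-1)$ standard character of $\Sy_n$, which extends to $\Sy_n$ and is non-principal (here $p \nmid |S|$ forces $p > n$, so $p \nmid |\Aut(S)|$ anyway); and when $S$ is sporadic, $\Out(S)$ is cyclic of order at most $2$, so (as $H^2$ of a cyclic group with values in $\CC^\times$ vanishes) any non-principal $\Out(S)$-invariant character extends to $\Aut(S)$.

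It then remains to treat the case $p \mid |\Out(S)|$. First I would show this forces $S$ to be of Lie type over $\FF_q$ with $q = r^f$ and $r \neq p$, and that $p$ divides the order $f$ of the group of field automorphisms while dividing neither the diagonal part $d$ nor the graph part $g$ of $|\Out(S)|$. The latter two are ruled out directly: any prime dividing $d$ divides $q-1$ or $q+1$ and hence $|S|$, contradicting $p \nmid |S|$; and $g \in \{2,3\}$, while both $2$ and $3$ always divide $|S|$ (the former by Feit--Thompson, the latter by a direct check on $D_4(q)$). Consequently a Sylow $p$-subgroup $P$ of $\Out(S)$ is cyclic and lies inside the field automorphisms; write $\tau_0$ for the subfield automorphism of order $p$, the $p$-part of the Frobenius $x \mapsto x^{r^{f/p}}$.

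In this main case the goal is to produce a non-principal $\theta \in \Irr(S)$ that extends to $I_{\Aut(S)}(\theta)$ and is moved by $\tau_0$, i.e. $\theta^{\tau_0} \neq \theta$. Granting this, the cyclic $p$-group $P$ acts freely on the $\Out(S)$-orbit of $\theta$, so that orbit has length divisible by $|P| = |\Out(S)|_p$; hence $p \nmid |I_{\Out(S)}(\theta)|$, and therefore $p \nmid |I_{\Aut(S)}(\theta)|$, as required. To construct $\theta$ I would use Deligne--Lusztig theory: the automorphism $\tau_0$ permutes the semisimple conjugacy classes of the dual group $G^*$ by $s \mapsto s^{r^{f/p}}$, compatibly with its action on the associated semisimple (equivalently, regular) characters of $S$. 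Choosing a regular semisimple $s \in G^*$ whose class is not $\tau_0$-stable then yields a character moved by $\tau_0$, and the constraint $p \mid f$ (so that $q$ is a genuine proper power) guarantees that such classes exist in every relevant group.

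The hard part is the extendibility in this last step: one must verify that the semisimple character $\theta$ selected above extends to its inertia subgroup in $\Aut(S)$, uniformly across all families of Lie type, including the twisted groups where the interaction of field and graph automorphisms requires care. I would handle this by invoking the extendibility results for semisimple and regular characters along the lines of \cite{Tiep}, after checking that $\theta$ can be arranged to fit the hypotheses of those results. Reconciling the two demands---that $\theta$ be moved by $\tau_0$ yet still extend to a possibly reduced inertia group---is where the main technical effort lies, and a short list of small groups of Lie type with $p \mid f$ may require separate verification.
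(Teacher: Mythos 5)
Your overall strategy coincides with the paper's: reduce to simple groups of Lie type with $p$ dividing the order $f$ of the field-automorphism group (using that $d$ and $g$ divide $|S|$), then produce a semisimple character that is moved by the field automorphism of order $p$ and extends to its inertia subgroup, via the machinery of \cite{Tiep}. Two points need attention, the first being a genuine logical gap in your orbit argument. From $\theta^{\tau_0}\neq\theta$ you conclude that the cyclic Sylow $p$-subgroup $P$ of $\Out(S)$ acts freely on the $\Out(S)$-orbit of $\theta$, hence that $p\nmid|I_{\Out(S)}(\theta)|$. But $\theta^{\tau_0}\neq\theta$ only shows $\Stab_P(\theta)=1$; it says nothing about $\Stab_P(\theta^x)$ for other members of the orbit. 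Since every subgroup of order $p$ of $\Out(S)$ is conjugate to $\langle\tau_0\rangle$ (Sylow's theorem plus $P$ cyclic), one has $p\mid|I_{\Out(S)}(\theta)|$ if and only if $\tau_0$ fixes \emph{some} $\Out(S)$-translate of $\theta$, equivalently $I_{\Aut(S)}(\theta)$ contains an $\Aut(S)$-conjugate of $\tau_0$. Because $\langle\tau_0\rangle$ need not be normal in $\Out(S)$ --- the field automorphisms act nontrivially on the diagonal automorphisms in general, e.g.\ for $\PSL_n(q)$ --- a character could be moved by $\tau_0$ yet fixed by $\delta\tau_0$ for a suitable diagonal automorphism $\delta$ with $\delta\tau_0$ of order $p$. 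So you must arrange that the whole $\Aut(S)$-orbit of $\theta$ (equivalently, the corresponding orbit of semisimple classes of $G^*$ under the duals of the outer automorphisms) contains no $\tau_0$-fixed point, not merely that $\theta$ itself is moved.

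Second, the step you yourself label ``the hard part'' --- simultaneously achieving (a) $\theta$ moved by $\tau_0$ in the strong sense above and (b) $\theta$ extendible to $I_{\Aut(S)}(\theta)$ --- is the actual content of the theorem and is not carried out; you only indicate that you would invoke \cite{Tiep}. That is exactly what the paper does: it quotes \cite[Proposition 5.8]{Tiep} for the existence of a real semisimple $s\in G^*$ of order coprime to $|\bZ(G^*)|$ whose class is not $\rho^{f/p}$-invariant, and Lemma~\ref{lemma-TiepProp-5.1} together with \cite[Proposition 5.1(iii)]{Tiep} for the irreducibility of $(\chi_s)_S$ and the extendibility of the resulting odd-degree semisimple character to its inertia subgroup. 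One organizational difference is worth noting: the paper runs the $p\mid f$ analysis only for $q$ even. For $q$ odd it instead uses the semisimple characters of \cite{Marinelli-Tiep} whose inertia subgroup in $\Aut(S)$ equals the inner-diagonal group $G$, so that $p\nmid|G|=|S|\cdot d$ holds automatically and no field-automorphism analysis is needed. Your uniform treatment would force the harder $p\mid f$ construction in odd characteristic as well, precisely where the diagonal-automorphism complications described above are most severe.
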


With Theorem~\ref{theorem-simple-groups-key1} in hand, we obtain the
following.

\begin{theorem}\label{theorem-extendible-characters-G}
Let $G$ be a finite group with a non-abelian minimal normal subgroup
$N$ of order divisible by a prime $p$. Then there exists
$\mu\in\Irr(N)$ such that $p\mid\mu(1)$ and $\mu$ is extendible to
$I_G(\mu)$.
\end{theorem}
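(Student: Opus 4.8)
The plan is to reduce the statement for $N$ to the simple-group input of Theorem~\ref{theorem-simple-groups-key1} via Lemma~\ref{lemma-extension}, and then to transport the resulting extendibility from $\Aut(N)$ down to $G$ through the conjugation action. First I would record the standard structure of a non-abelian minimal normal subgroup: since $N$ is a non-abelian minimal normal subgroup of $G$, it is a direct product $N = S_1 \times \cdots \times S_n$ of copies $S_i \cong S$ of a single non-abelian simple group $S$, and the hypothesis $p \mid |N|$ forces $p \mid |S|$. Applying Theorem~\ref{theorem-simple-groups-key1} to $S$ produces a character $\theta \in \Irr(S)$ with $p \mid \theta(1)$ that is extendible to $I_{\Aut(S)}(\theta)$.

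Next I would assemble the candidate character on $N$. Setting $\mu := \theta \times \cdots \times \theta \in \Irr(N)$, the outer tensor product of $n$ copies of $\theta$, we immediately get $\mu(1) = \theta(1)^n$, so $p \mid \mu(1)$. Because $\theta$ is extendible to its inertia subgroup in $\Aut(S)$, Lemma~\ref{lemma-extension} applies verbatim and shows that $\mu$ is extendible to $I_{\Aut(N)}(\mu)$; fix such an extension $\widetilde{\mu} \in \Irr(I_{\Aut(N)}(\mu))$.

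Finally I would descend from $\Aut(N)$ to $G$. The conjugation action gives a homomorphism $\phi\colon G \to \Aut(N)$ with kernel $\bC_G(N)$, and since $N$ has trivial center, $\phi$ restricts to an isomorphism of $N$ onto $\Inn(N) \leq \Aut(N)$. For $g \in I_G(\mu)$ one has $\mu^{\phi(g)} = \mu$, so $\phi$ maps $I_G(\mu)$ into $I_{\Aut(N)}(\mu)$; pulling $\widetilde{\mu}$ back along $\phi$ therefore yields a character $\widetilde{\mu} \circ \phi$ of $I_G(\mu)$. The one point that needs care—and which I expect to be the only real subtlety—is checking that this pullback genuinely extends $\mu$: for $x \in N$ the element $\phi(x)$ is the inner automorphism induced by $x$, and the identification $N \cong \Inn(N)$ is precisely the one under which $\widetilde{\mu}|_{\Inn(N)} = \mu$, so $(\widetilde{\mu}\circ\phi)(x) = \mu(x)$ for all $x \in N$. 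Hence $\widetilde{\mu}\circ\phi$ is the desired extension of $\mu$ to $I_G(\mu)$, and $\mu$ has all the required properties. The whole argument is short once the two ingredients are in place; the genuine content is hidden in Theorem~\ref{theorem-simple-groups-key1}, whose proof (deferred to the final section and relying on the classification) is the true obstacle.
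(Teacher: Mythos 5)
Your proposal is correct and follows essentially the same route as the paper: decompose $N \cong S^n$, apply Theorem~\ref{theorem-simple-groups-key1} to get $\theta$, form $\mu = \theta \times \cdots \times \theta$, and invoke Lemma~\ref{lemma-extension}. The only difference is presentational: where the paper assumes $\bC_G(N)=1$ ``without loss of generality'' and identifies $G$ with a subgroup of $\Aut(N)$, you make that reduction explicit by pulling the extension back along the conjugation homomorphism $G \to \Aut(N)$, which is exactly the justification the paper's WLOG tacitly relies on.
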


\begin{proof}
Suppose that $N\cong S\times S\times\cdots\times S$, a direct
product of $r$ copies of a non-abelian simple group $S$ with $p\mid
|S|$. With no loss of generality, we may assume that $\bC_G(N)=1$.
Then $N\unlhd G \leq \Aut(N)=\Aut(S)\wr \Sy_r.$

By Theorem~\ref{theorem-simple-groups-key1}, we can find a character
$\theta\in\Irr(S)$ such that $p\mid \theta(1)$ and $\theta$ is
extendible to $I_{\Aut(S)}(\theta)$. Let
$\mu:=\theta\times\cdots\times\theta\in\Irr(N)$. Now using
Lemma~\ref{lemma-extension}, we have that $\mu$ is extendible to a
character of $I_{\Aut(N)}(\mu)$, which implies that $\mu$ is
extendible to a character of $I_G(\mu)=G\cap I_{\Aut(N)}(\mu)$.
Finally we note that $p\mid \mu(1)$ since $p\mid \theta(1)$.
\end{proof}

\begin{theorem}\label{theorem-main-2-again-1}
Let $p$ be an odd prime and set $a_p:=7/3$ if $p=3$ and
$a_p:=(p+1)/2$ if $p\geq 5$. Let $G$ be a finite group such that
$\acd_p(G)<a_p$. Then $G$ is $p$-solvable.
\end{theorem}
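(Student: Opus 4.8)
The plan is to prove the contrapositive by induction on $|G|$: assuming $G$ is not $p$-solvable with $\acd_p(G) < a_p$, I will derive a contradiction. Since $G$ is not $p$-solvable, it has a chief factor that is a nonabelian simple group (or a direct product of copies of one) of order divisible by $p$. The strategy is to produce enough high-degree characters in $\Irr_p(G)$ to force $\acd_p(G) \geq a_p$, using the extendibility-divisibility input from Theorem~\ref{theorem-simple-groups-key1}. First I would reduce to a convenient setup: using Lemma~\ref{L-subset} (which gives $\acd_p(G/N) \leq \acd_p(G)$ when $N \lhd G$, $N \leq G'$, provided $\acd_p(G) \leq p$, and note $a_p \leq p$ for all the relevant primes), I can pass to quotients to arrange that $G$ has a unique minimal normal subgroup $N \cong S^r$ with $S$ a nonabelian simple group, and I may assume $\bC_G(N) = 1$ so that $N \lhd G \leq \Aut(N)$.

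\emph{The counting mechanism.} The heart of the argument is to bound the contribution of linear characters. By Theorem~\ref{theorem-extendible-characters-G}, there is $\mu \in \Irr(N)$ with $p \mid \mu(1)$ that is extendible to $I_G(\mu)$; set $d := \mu(1)[G:I_G(\mu)]$. By Gallagher/Clifford theory, the extendibility of $\mu$ produces, via tensoring with $\Irr(I_G(\mu)/N)$ and inducing, a family of irreducible characters of $G$ of degree divisible by $p$ (all lying in $\Irr_p(G)$), and in particular at least one of degree $d$, so that $n_d(G) \geq 1$. Crucially, Lemma~\ref{lemma-n1-n2} gives $n_1(G) \leq n_d(G)[G:I_G(\mu)]$, which controls how many linear characters can exist relative to these large-degree characters. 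The point is then a weighted-average estimate: writing $\Irr_p(G)$ as the linear characters together with the $p$-divisible ones, and feeding in the lower bound on the large degrees (each such $\chi(1)$ is a multiple of $p$, hence $\geq p$, and for $p \geq 5$ one uses that $d$ is large because $\mu(1) \geq p \geq 5$ and the index contributes), I would show the average exceeds $(p+1)/2$ (respectively $7/3$ when $p=3$). The case $n \geq 2$ copies, where the action of $G$ on the factors may itself have order divisible by $p$, is handled by Proposition~\ref{pro-G/K-divisible-by-p}, which supplies an extendible $\mu$ with $\mu(1) \geq 12$ and $p \mid [G:I_G(\mu)]$, giving an even larger $d$ and a stronger average bound.

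\emph{The prime-by-prime bookkeeping.} The delicate part is that the constant $a_p$ is tight, so the estimate must be essentially optimal. I expect the cleanest route is: let $L$ be the number of linear characters and let the $p$-divisible characters split into those of degree exactly $p$ and those of degree $\geq 2p$ (or the large ones of degree $d$). Using $n_1(G) \leq n_d(G)[G:I_G(\mu)]$ together with $d = \mu(1)[G:I_G(\mu)] \geq p[G:I_G(\mu)]$, each large character of degree $d$ ``pays for'' at most $[G:I_G(\mu)]$ linear characters, and since $d/[G:I_G(\mu)] = \mu(1) \geq p$, the average contribution per such pairing is at least $(L \cdot 1 + n_d \cdot d)/(L + n_d) \geq (p \cdot [G{:}I_G(\mu)] \cdot 1 + \text{(something)})/(\cdots)$, which I would arrange to exceed $a_p$. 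The main obstacle will be making this final numerical inequality tight enough to hit the exact threshold $(p+1)/2$ for all $p \geq 5$ simultaneously, rather than a weaker constant; this requires carefully tracking that the smallest possible large degree relative to the number of linear characters is achieved, and confirming that no degenerate configuration (such as $I_G(\mu) = G$ with very few linear characters) slips below the bound. Once the inequality $\acd_p(G) \geq a_p$ is established in every case, it contradicts the hypothesis, completing the induction and proving $p$-solvability.
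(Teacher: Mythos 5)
Your plan for $p\geq 5$ is essentially the paper's argument and it does close: from $n_1(G)\leq n_d(G)\,e$ with $e:=[G:I_G(\mu)]$ and $d=\mu(1)e\geq pe$, the sub-average over $\{\chi:\chi(1)=1\text{ or }d\}$ is at least $(e+d)/(e+1)\geq (p+1)e/(e+1)\geq (p+1)/2$, and Lemma~\ref{L-subset} (applicable since $a_p\leq p$) transfers this to $\acd_p(G)\geq a_p$, the desired contradiction. (Your invocation of Proposition~\ref{pro-G/K-divisible-by-p} is unnecessary here: Theorem~\ref{theorem-extendible-characters-G} already handles all $r\geq 1$ uniformly because $p\mid |S|$; that proposition is really needed only for the solvability of $\bO^{p'}(G)$, where $S$ is a $p'$-group.)

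The genuine gap is the case $p=3$. There the degenerate configuration you flag is not a bookkeeping nuisance but a real failure of the mechanism: if $\mu(1)=3$ and $I_G(\mu)=G$, the estimate above bottoms out at $(1+3)/(1+1)=2$, which does not contradict $\acd_3(G)<7/3$. No refinement of the same inequality can fix this; you need new input. The paper's resolution, which your proposal is missing, is: (a) the only non-abelian simple groups with an irreducible character of degree $3$ are $\Al_5$ and $\PSL_2(7)$, and $\PSL_2(7)$ has an extendible character of degree $6$, so unless $N\cong\Al_5$ one may take $\mu(1)\geq 6$ and the generic count gives $\acd_3(G)\geq 7/2$; and (b) when $N\cong\Al_5$, one exploits that the two degree-$3$ characters $\mu_1,\mu_2$ of $\Al_5$ are fused under $\Aut(\Al_5)\cong\Sy_5$. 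If they are fused in $G$ (index $2$ inertia group), then $I_G(\mu_1)=N\times\bC_G(N)$, $\mu_1$ extends there, $G$ has a degree-$6$ character above it, and $n_1(G)\leq 2n_6(G)$ gives $\acd_3(G)\geq 8/3$; if they are not fused, then $G\cong N\times\bC_G(N)$ and both $\mu_i$ tensor with every linear character of $\bC_G(N)$, yielding $n_1(G)\leq n_3(G)/2$ and hence $\acd_3(G)\geq 7/3=a_3$. Without this case analysis the theorem is not proved for $p=3$.
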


\begin{proof}
(i) Assume that the statement is false and let $G$ be a minimal
counterexample.  In particular, $G$ is not $p$-solvable. Take a
minimal normal subgroup $N$ of $G$ such that $N\leq G'$.
By Lemma \ref{L-subset}, $\acd_{p}(G/N)\leq \acd_{p}(G)<a_p$. The minimality of
$G$ then implies that $G/N$ is $p$-solvable, which in turns implies
that $N$ is not $p$-solvable since $G$ is not. This means that $N$
is a direct product of $r$ copies of a non-abelian simple group, say $S$,
with $p\mid |S|$.

Applying Theorem~\ref{theorem-extendible-characters-G}, we can now
find a $\mu\in\Irr(N)$ such that $p\mid\mu(1)$ and $\mu$ is
extendible to a character of $I_G(\mu)$, and the latter induced to $G$ yields an irreducible
character of $G$ of degree $d:=\mu(1)[G:I_G(\mu)]$. It then follows from
Lemma~\ref{lemma-n1-n2} that
\[
n_{1}(G)\leq n_{d}(G)[G:I_G(\mu)].
\]
Clearly, $(p-1)[G:I_G(\mu)]\leq 2p[G:I_G(\mu)]-p-1$. Therefore,
\[ (p-1)n_{1}(G)\leq (p-1)n_{d}(G)[G:I_G(\mu)]\leq
n_{d}(G)(2p[G:I_G(\mu)]-p-1).
\]
Since $\mu(1)\geq p$, we then have $p[G:I_G(\mu)] \leq d$, and so
\[
(p-1)n_{1}(G) \leq (2d-p-1)n_{d}(G).
\]
Now let $\Irr^*(G) := \{ \chi \in \Irr(G) \mid \chi(1) =1 \mbox{ or }d\}$. Since $\acd_p(G) \leq p$, by Lemma \ref{L-subset} we have
\[\acd_{p}(G) \geq \frac{\sum_{\chi \in \Irr^*(G)}\chi(1)}{|\Irr^*(G)|} = \frac{n_1(G) + dn_d(G)}{n_1(G)+n_d(G)}
 \geq \frac{p+1}{2}.\]
This is a contradiction when $p\geq 5$.

\smallskip
(ii) It now remains to consider the case $p=3$. Note that $\Al_5$ and
$\PSL_2(7)$ are the only non-abelian simple groups with an
irreducible character of degree $3$. If $S=\PSL_2(7)$, then it has an irreducible
character of degree 6 that is extendible to a character of
$\Aut(\PSL_2(7))$. Therefore, if $N \cong S^r$ is not
isomorphic to $\Al_5$, then from the construction of
$\mu$ in the proof of Theorem~\ref{theorem-extendible-characters-G},
we have $\mu(1)=\theta(1)^r\geq 6$.
We can now repeat the arguments in (i)  to get $\acd_{3}(G)\geq 7/2 > a_3$,
and this violates the hypothesis.

\smallskip
(iii) The only case left is $p=3$ and $N\cong \Al_5$. We then observe that
$N$ has two distinct irreducible characters, say $\mu_1$ and
$\mu_2$, of degree 3 that are fused under $\Sy_5\cong\Aut(N)$. Here
$I_{\Aut(N)}(\mu_1)=I_{\Aut(N)}(\mu_2)=N$ and
$I_G(\mu_1)=I_G(\mu_2)$ has index 1 or 2 in $G$. Note
that $\bC_G(N) \cap N = 1$, $N\bC_G(N) \leq I_G(\mu_1)$, and $G/N\bC_G(N) \hookrightarrow \Out(N) = C_2$.
Suppose $[G:I_G(\mu_1)]=2$.  Then $I_G(\mu_1) = N \times \bC_G(N)$ and so $\mu_1$ extends to $I_G(\mu_1)$.
It follows that $G$ has an irreducible character of degree $6$ lying above $\mu_1$, and $n_{1}(G)\leq 2n_{6}(G)$ by Lemma
\ref{lemma-n1-n2}. Arguing as above, we obtain
easily that $\acd_{3}(G)\geq 8/3$. On the other hand, if
$[G:I_G(\mu_1)]=1$ then $G$ does not induce an outer automorphism of
$N=\Al_5$, and so $G\cong N\times \bC_G(N)$. In this case, the outer tensor product of each $\mu_{1,2}$ with a linear
character of $\bC_G(N)$ yields an irreducible character of degree $3$ of $G$, hence
$$n_1(G) = n_1(\bC_G(N)),~n_3(G) \geq 2n_1(\bC_G(N)),$$
and so $n_1(G) \leq n_3(G)/2$. It now follows that
$\acd_{3}(G)\geq 7/3=a_3$, a contradiction.
\end{proof}

We are now ready to prove Theorem~\ref{theorem-main-3},
which is restated below.

\begin{theorem}\label{theorem-main-2-again-2}
Let $p$ be a prime and set $a_2:=5/2$, $a_3:=7/3$ and $a_p:=(p+1)/2$
if $p\geq 5$. Let $G$ be a finite group such that $\acd_p(G)<a_p$.
Then $\bO^{p'}(G)$ is solvable.
\end{theorem}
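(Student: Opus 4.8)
The plan is to reduce the odd-prime case to Theorem~\ref{theorem-main-2-again-1} plus Theorem~\ref{theorem-simple-groups-key2}, and to dispose of $p=2$ separately. When $p=2$ the hypothesis $\acd_2(G)<5/2$ need not yield $\acd_2(G)\le p$, so the reduction engine Lemma~\ref{L-subset} is unavailable; on the other hand every non-abelian simple group has even order, so $\bO^{2'}(G)$ is solvable exactly when $G$ is solvable, and for this I would invoke the solvability criterion for $p=2$ from \cite{Hung-Tiep}. So assume $p$ is odd; then $a_p<p$, and Theorem~\ref{theorem-main-2-again-1} already gives that $G$ is $p$-solvable.

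I would then argue by contradiction with $G$ of minimal order subject to $p$ odd, $\acd_p(G)<a_p$, and $\bO^{p'}(G)$ non-solvable. Choosing a minimal normal $N\le G'$ and using $a_p<p$, Lemma~\ref{L-subset} gives $\acd_p(G/N)\le\acd_p(G)<a_p$, so minimality makes $\bO^{p'}(G/N)=\bO^{p'}(G)N/N$ solvable; since $\bO^{p'}(G)$ is not, $\bO^{p'}(G)\cap N$ is non-solvable, forcing $N\le\bO^{p'}(G)$ and $N\cong S^r$ with $S$ non-abelian simple. As $G$ is $p$-solvable, $p\nmid|S|$. Writing $H:=\bO^{p'}(G)$, one has $\bO^{p'}(H)=H$, and since $H/N=\bO^{p'}(G/N)$ is solvable, $\bO^{p'}$-perfect, nontrivial (as $N$ is a $p'$-group), we get $p\mid|H/N|$. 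Passing to $\bar G:=G/\bC_G(N)\hookrightarrow\Aut(N)=\Aut(S)\wr\Sy_r$, which leaves the $G$-orbit size of any character of $N$ unchanged, the image $\bar H$ of $H$ satisfies $\bO^{p'}(\bar H)=\bar H$ and $p\mid|\bar H/N|$ with $\bar H/N\hookrightarrow\Out(S)\wr\Sy_r$. Hence a Sylow $p$-subgroup of $\bar G$ has nontrivial image in $\Out(S)\wr\Sy_r$.

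Now split into two cases. If that image is nontrivial in $\Sy_r$ (so $r\ge p$), I would use the construction of Proposition~\ref{pro-G/K-divisible-by-p}, based on the two extendible characters of $S$ of distinct degrees $\ge3$ from \cite[Theorem 3.2]{Tiep}, but spread over enough of the $r\ge p$ factors that the resulting $\mu\in\Irr(N)$ has $\mu(1)\ge p$, while still being extendible to $I_G(\mu)$ and satisfying $p\mid[G:I_G(\mu)]$. If instead the $p$-element acts diagonally, then $p\mid|\Out(S)|$, and Theorem~\ref{theorem-simple-groups-key2} supplies a non-principal extendible $\theta\in\Irr(S)$ with $p\nmid|I_{\Aut(S)}(\theta)|$; then $\mu:=\theta\times\cdots\times\theta$ is extendible to $I_G(\mu)$ by Lemma~\ref{lemma-extension}, is moved by the diagonal $p$-element (its order is $p$, while $I_{\Aut(S)}(\theta)$ has order prime to $p$), so $p\mid[G:I_G(\mu)]$, and $\mu(1)=\theta(1)^r\ge d(S)$, the smallest nontrivial character degree of $S$. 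In either case the extension of $\mu$ induces to an irreducible character of degree $d=\mu(1)[G:I_G(\mu)]$ divisible by $p$, and Lemmas~\ref{lemma-n1-n2} and~\ref{L-subset} give, exactly as in part~(i) of the proof of Theorem~\ref{theorem-main-2-again-1}, the bound $\acd_p(G)\ge(p+1)/2$ as soon as $\mu(1)\ge p$ (and a strictly larger bound from the explicit value of $\mu(1)$ when $p=3$, covering $a_3=7/3$), contradicting $\acd_p(G)<a_p$.

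The hard part is securing $\mu(1)\ge p$ in the diagonal case, which amounts to showing, through the classification, that a non-abelian simple $S$ with $p\nmid|S|$ and $p\mid|\Out(S)|$ has $d(S)\ge p$. For odd $p$ such an $S$ must be of Lie type with $p$ dividing the order of its field automorphisms, hence defined over $\FF_{\ell^f}$ with $p\mid f$, so $\ell^f\ge 2^p$ and $d(S)$ grows like a positive power of $\ell^f$; the extreme instances are the Suzuki groups $\mathrm{Sz}(2^f)$ with $3\mid f$ for $p=3$, where $d(S)\ge14$ comfortably exceeds $7/3$. The remaining technical point is the extendibility of the asymmetric $\mu$ in the permutation case, which I expect to follow from the same wreath-product/tensor-induction argument underlying Lemma~\ref{lemma-extension} and Proposition~\ref{pro-G/K-divisible-by-p}.
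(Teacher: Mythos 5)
Your proposal reproduces the paper's proof in all essentials: \cite[Theorem 1.2]{Hung-Tiep} for $p=2$; a minimal counterexample reduced via Lemma~\ref{L-subset} to a non-abelian minimal normal subgroup $N\cong S^r$ inside $G'\cap\bO^{p'}(G)$ with $S$ a $p'$-group (using Theorem~\ref{theorem-main-2-again-1} for $p$-solvability); the dichotomy according to whether a $p$-element of $G/\bC_G(N)$ moves the simple factors or lies in the kernel $K$ of that action; Proposition~\ref{pro-G/K-divisible-by-p} in the first case and Theorem~\ref{theorem-simple-groups-key2} with Lemma~\ref{lemma-extension} in the second; and the same numerics via Lemmas~\ref{lemma-n1-n2} and~\ref{L-subset}. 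Two of your refinements are welcome: in the diagonal case, your justification that $\theta(1)\ge p$ (because $p\nmid |S|$ and $p\mid|\Out(S)|$ force $S$ to be of Lie type over a field of size at least $2^p$) fills in a point the paper leaves implicit for $p\ge 5$, spelling out only the Suzuki case for $p=3$. One small imprecision there: ``$\mu$ is moved by a $p$-element'' gives $I_G(\mu)\neq G$ but not by itself $p\mid[G:I_G(\mu)]$; the correct (and easily supplied) statement is that $(K\cap I_G(\mu))/\bC_G(N)$ embeds in the $p'$-group $I_{\Aut(S)}(\theta)^r$ while $p$ divides $|K/\bC_G(N)|$, so $p$ divides $[K:K\cap I_G(\mu)]$.

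The one genuine gap is in the permutation case. You correctly sense that the averaging step needs $\mu(1)$ to be comparable to $p$, and you propose to ``spread the construction over enough of the $r\ge p$ factors'' to force $\mu(1)\ge p$ while keeping extendibility and $p\mid[G:I_G(\mu)]$; but this is asserted, not proved, and it is not routine. The Casolo--Dolfi lemma underlying Proposition~\ref{pro-G/K-divisible-by-p} produces specific disjoint subsets $\Omega_1,\Omega_2$ with $p\mid[G:\Stab_G(\Omega_1,\Omega_2)]$ and gives no control over $|\Omega_1|+|\Omega_2|$ (both may be singletons, e.g.\ for a $p$-cycle permuting $r=p$ copies of $\Al_5$), and enlarging these sets, or decorating further factors with non-principal characters, changes the inertia group and can destroy the divisibility of the index by $p$. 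The paper instead uses the proposition as stated, with only $\mu(1)\ge 12$; combining that with $[G:I_G(\mu)]\ge p$ already pushes the relevant average up to at least $(1+\mu(1))[G:I_G(\mu)]/([G:I_G(\mu)]+1)$, which exceeds $(p+1)/2$ for all $p\le 23$ with no strengthening of the proposition, so for moderate primes your extra construction is unnecessary. For larger primes some additional input of the kind you gesture at is indeed required, so you must either actually prove your strengthened version of Proposition~\ref{pro-G/K-divisible-by-p} or supply another argument; as written, this step does not stand.
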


\begin{proof}
The case $p=2$ was already proved in \cite[Theorem 1.2]{Hung-Tiep}.
Therefore we may assume that $p$ is odd.

Let $G$ be a minimal counterexample to the statement. In particular,
$\bO^{p'}(G)$ is not solvable. Let $N/N_1$ be a non-abelian chief
factor of $G$ inside $\bO^{p'}(G)$ such that $|N|$ is smallest
possible. Then clearly $N$ must be perfect, so that
$N_1\lhd N=N' \lhd G'$. It follows by Lemma \ref{L-subset} that
\[
\acd_{p}(G/N_1)\leq \acd_{p}(G)<a_p,
\]
and thus, by the minimality of $G$, we have $N_1=1$. This means that
$N$ is a non-abelian minimal normal subgroup of $G$. Suppose that
$$N= S_1\times S_2\times\cdots\times S_n,$$
where the $S_i$'s are all isomorphic to a non-abelian simple group $S$.

By Theorem~\ref{theorem-main-2-again-1} we know that $G$ is
$p$-solvable. So $S$ is a $p'$-group. Since $N$ is non-abelian, we
have $N\nleq \bC_G(N)$, which implies that
$\bO^{p'}(G)\nleq \bC_G(N)$. It then follows that
$|G/\bC_G(N)|$ is divisible by $p$.

Now let $K$ be the kernel of the transitive action of $G$ on the set
$\{S_1,S_2,\ldots,S_n\}$ of the $n$ simple direct factors of $N$. Since
$\bC_G(N)\leq K$, we
deduce that
\[
\text{either } p\mid |K/\bC_G(N)| \text{ or } p\mid |G/K|.
\]

Let us first consider the case $p\mid |G/K|$. Then $n\geq 2$ and, by
Proposition~\ref{pro-G/K-divisible-by-p}, we can find $\mu\in\Irr(N)$
such that $\mu(1)\geq 12$, $\mu$ is extendible to a character of
$I_G(\mu)$, and $[G:I_G(\mu)]$ is divisible by $p$.
Lemma~\ref{lemma-n1-n2} then implies that $n_{1}(G)\leq
n_{d}(G)[G:I_G(\mu)]$, where $d:=\mu(1)[G:I_G(\mu)]\geq
12[G:I_G(\mu)]$ is divisible by $p$. Now we can proceed as in the
proof of Theorem~\ref{theorem-main-2-again-1} to show that
$\acd_{p}(G)\geq a_p$, and this is a contradiction.

We now may assume that $p\mid |K/\bC_G(N)|$. Recall that $S$ is a
$p'$-group. Therefore, by Theorem~\ref{theorem-simple-groups-key2},
we can find $\theta\in\Irr(S)$ such that $\theta$ is extendible to a
character of $I_{\Aut(S)}(\theta)$ and $p\nmid
|I_{\Aut(S)}(\theta)|$. Arguing as in the proof of
Theorem~\ref{theorem-extendible-characters-G} and viewing $N$ as a
subgroup of $G/\bC_G(N)$, we then can show that
$\mu:=\theta\times\cdots\times\theta\in\Irr(N)$ is extendible to a
character of \[I_{G/\bC_G(N)}(\mu)=(G/\bC_G(N))\cap
I_{\Aut(N)}(\mu),\] where $I_{\Aut(N)}(\mu)=I_{\Aut(S)}(\theta)\wr
\Sy_n$. In particular, $\mu$ extends to a character of $I_G(\mu)$.

We claim that $[G:I_G(\mu)]$ is divisible by $p$. Recall that $K$ is
the kernel of the action of $G$ on $\{S_1,S_2,\ldots,S_n\}$. Hence
$K/\bC_G(N)$ is contained in $\Aut(S)^n$, the base subgroup of the
wreath product $\Aut(N)=\Aut(S)\wr\Sy_n$. It follows that the index
\[
\left[\frac{K}{\bC_G(N)}:\left(\frac{K}{\bC_G(N)}\cap
I_{\Aut(N)}(\mu)\right)\right]=\left[\frac{K}{\bC_G(N)}:\left(\frac{K}{\bC_G(N)}\cap
I_{\Aut(S)}(\theta)^n\right)\right]
\]
is divisible by $p$ since $p\mid |K/\bC_G(N)|$ and $p\nmid
|I_{\Aut(S)}(\theta)|$. In particular, the index
\begin{align*}
[G:I_G(\mu)]=&[(G/\bC_G(N)):I_{G/\bC_G(N)}(\mu)]\\
=&\left[\frac{G}{\bC_G(N)}: \left(\frac{G}{\bC_G(N)} \cap
I_{\Aut(N)}(\mu)\right)\right]
\end{align*}
is also divisible by $p$, as claimed above.

Now we again apply Lemma~\ref{lemma-n1-n2} to have $n_{1}(G)\leq
n_{d}(G)[G:I_G(\mu)],$ where $d:=\mu(1)[G:I_G(\mu)]$ is divisible by
$p$. When $p\geq 5$ we can argue as in p. (i) of the proof of
Theorem~\ref{theorem-main-2-again-1} to show that $\acd_{p}(G)\geq
a_p$. Suppose $p=3$. Then, as $N$ is a $p'$-group, $N$ must be a simple
Suzuki group. Hence, we can choose $\mu$ so that $\mu(1)\geq
6$, which implies that $\acd_{3}(G)>7/3=a_3$, again a contradiction.
\end{proof}


\section{Normality of Sylow $p$-subgroups}

We start with a lemma.

\begin{lemma}\label{lemma-orbit}
Let $p$ be a prime and let $G=N\rtimes H$ where $N$ is an abelian
group. Assume that $\acd_p(G)\leq p$. Then, in the action of $H$ on
$\Irr(N)\smallsetminus \{1_N\}$, there exists an orbit $\mathcal{O}$ such that
$|\mathcal{O}|=1$ or $p\mid |\mathcal{O}|$ and \[
\frac{|\mathcal{O}|(f+1)}{|\mathcal{O}|+f}\leq \acd_p(G),
\] where $f$ is the number of $H$-orbits on $\Irr(N) \smallsetminus \{1_N\}$ whose sizes are 1 or divisible
by $p$.
\end{lemma}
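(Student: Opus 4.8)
The plan is to describe $\Irr(G)$ through Clifford theory relative to the abelian normal subgroup $N$ and then feed a carefully chosen subset of $\Irr_p(G)$ into Lemma~\ref{L-subset}. Since $N$ is abelian, every $\lambda\in\Irr(N)$ is linear and, being invariant on its stabilizer $H_\lambda:=\Stab_H(\lambda)$, extends to a linear character $\tilde\lambda$ of $N\rtimes H_\lambda=I_G(\lambda)$ via $\tilde\lambda(nh)=\lambda(n)$. By Gallagher's theorem together with the Clifford correspondence, the irreducible characters of $G$ lying over the $H$-orbit $\mathcal{O}_\lambda$ are exactly the $\Ind_{I_G(\lambda)}^G(\tilde\lambda\beta)$ for $\beta\in\Irr(H_\lambda)$, of degree $|\mathcal{O}_\lambda|\beta(1)$. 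In particular a character of $G$ is linear precisely when it lies over an $H$-fixed point of $\Irr(N)$ and is inflated from a linear character of the corresponding stabilizer, while every character lying over an orbit of size divisible by $p$ has degree divisible by $p$, hence lies in $\Irr_p(G)$. I would first dispose of the easy case: if some orbit on $\Irr(N)\smallsetminus\{1_N\}$ has size $1$, take $\mathcal{O}$ to be that orbit, so that $|\mathcal{O}|(f+1)/(|\mathcal{O}|+f)=1\leq\acd_p(G)$, the last inequality holding because an average of positive integers is at least $1$. (Here I tacitly assume $f\geq 1$, which the existence statement presupposes.)

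So I may assume every orbit on $\Irr(N)\smallsetminus\{1_N\}$ has size $\geq 2$; then the only $H$-fixed point is $1_N$, the linear characters of $G$ are exactly the $n_1(H)$ inflations of linear characters of $H$, and the good orbits $\mathcal{O}_1,\dots,\mathcal{O}_f$ all have size $o_i:=|\mathcal{O}_i|$ divisible by $p$. Let $\mathcal{O}$ be a good orbit of minimal size $o:=\min_i o_i\geq p$, with representatives $\lambda_i\in\mathcal{O}_i$. I would then set
\[
\Irr^*(G):=\{\text{linear characters of }G\}\cup\bigcup_{i=1}^{f}\{\Ind_{I_G(\lambda_i)}^G(\tilde\lambda_i\beta):\beta\in\Irr(H_{\lambda_i}),\ \beta(1)=1\}.
\]
Every character in the second family has degree $o_i$, divisible by $p$, so $\Irr^*(G)\subseteq\Irr_p(G)$ and contains all linear characters; since $\acd_p(G)\leq p$, Lemma~\ref{L-subset} gives
\[
\acd_p(G)\geq\frac{n_1(H)+\sum_{i=1}^f o_i\,n_1(H_{\lambda_i})}{n_1(H)+\sum_{i=1}^f n_1(H_{\lambda_i})}.
\]

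It then remains to show this right-hand side is at least $o(f+1)/(o+f)$. Writing $w_0:=n_1(H)$ and $w_i:=n_1(H_{\lambda_i})$, the crucial input is the group-theoretic inequality $w_i\,o_i\geq w_0$, which follows from $H_{\lambda_i}'\leq H'$ since $w_i o_i=|H|/|H_{\lambda_i}'|\geq|H|/|H'|=w_0$. Clearing denominators, the desired bound is equivalent to $\sum_{i=1}^f w_i c_i\geq w_0 f(o-1)$, where $c_i:=(o+f)o_i-o(f+1)=o(o_i-1)+f(o_i-o)\geq 0$ because $o_i\geq o\geq 2$; substituting $w_i\geq w_0/o_i$ into the nonnegative terms reduces everything to the elementary estimate $\sum_{i=1}^f o/o_i\leq f$, which holds since each $o/o_i\leq 1$. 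This yields $o(f+1)/(o+f)\leq\acd_p(G)$, completing the proof. The main obstacle is precisely this last step: a subset with one character per orbit is too weak (it would force the false requirement $o_i\geq n_1(H)$), so one must include all stabilizer-linear characters over each good orbit and exploit $n_1(H_{\lambda_i})\,|\mathcal{O}_i|\geq n_1(H)$ to push the weighted average up to the target value at the \emph{minimal} orbit; identifying the right subset and the weight inequality is the crux.
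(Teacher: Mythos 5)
Your proof is correct and follows essentially the same route as the paper: extend each orbit representative to its inertia group via the splitting, apply Lemma~\ref{L-subset} to the subset of $\Irr_p(G)$ consisting of the linear characters of $G$ together with the degree-$|\mathcal{O}_i|$ characters over the good orbits, and exploit $n_1(H_{\lambda_i})\,|\mathcal{O}_i|\geq n_1(H)$ coming from $H_{\lambda_i}'\leq H'$. The only (harmless) differences are that the paper also throws the characters with $p\mid\beta(1)$ into $\Irr^*(G)$ and then cancels them using $\acd_p(G)\leq p$, and it locates the desired orbit by an averaging/pigeonhole step rather than by your direct verification that the minimal good orbit works.
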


\begin{proof}
(i) Let
$\{1_N=\alpha_0,\alpha_1,\ldots,\alpha_f,\alpha_{f+1}\ldots,\alpha_g\}$
be a set of representatives of the $H$-orbits on
$\Irr(N)$, where $\{\alpha_0,\alpha_1,\ldots,\alpha_f\}$ are
representatives of those orbits whose sizes are 1 or divisible by
$p$. For each $0\leq i\leq g$, let $I_i$ be the inertia subgroup of
$\alpha_i$ in $G$, and set
$$n_{i,p}:=\sum_{p|k} n_{k}(I_i/N),~~s_{i,p}:=\sum_{\lambda\in\Irr(I_i/N),~p|\lambda(1)} \lambda(1).$$

Since $G$ splits over $N$, every $I_i$ also splits over $N$, and in fact $I_i = N/\Ker(\alpha_i) \times I_H(\alpha_i)$
as $N$ is abelian. It follows that $\alpha_i$ extends to a unique linear character
$\beta_i$ of $I_i$ that is trivial at $I_H(\alpha_i)$. Gallagher's theorem
then provides us with a bijective mapping $\lambda\mapsto \lambda\beta_i$
from $\Irr(I_i/N)$ to the set of irreducible characters of $I_i$
lying above $\alpha_i$. By the Clifford correspondence, we then
obtain a bijection $\lambda\mapsto (\lambda\beta_i)^G$ from
$\Irr(I_i/N)$ to the set of irreducible characters of $G$ lying
above $\alpha_i$. We note that
$(\lambda\beta_i)^G(1)=[G:I_i]\lambda(1)$, and hence $p\mid
(\lambda\beta_i)^G(1)$ if and only if either $p\mid [G:I_i]$ or
$p\mid \lambda(1)$.

\smallskip
(ii) Now we take $\Irr^*(G)$ to be the set of irreducible characters $(\lambda\beta_i)^G$, where $\lambda \in \Irr(I_i/N)$ and exactly one of the following
holds:
\begin{enumerate}[\rm(a)]
\item $I_i=G$, $\lambda(1) = 1$;
\item $p \mid \lambda(1)$;
\item  $p$ divides $[G:I_i]$ and $\lambda(1) = 1$.
\end{enumerate}
By its construction and the above discussion, $\Irr^*(G)$ is contained in $\Irr_p(G)$ and contains all linear characters of $G$. As
$\acd_p(G) \leq p$, Lemma \ref{L-subset} applies to $\Irr^*(G)$. Note
that the number of linear characters in $\Irr^*(G)$ is $\sum_{i:\,G=I_i}n_1(I_i/N)$ (as they all come from type (a).
On the other hand, the non-linear characters in $\Irr^*(G)$ come from types (b) and (c), and so
the number of them is $\sum_{i:\,p|[G:I_i]}n_1(I_i/N)+\sum_{i=0}^g
n_{i,p}$. It follows from Lemma \ref{L-subset} that
\[
\sum_{\chi\in\Irr^*(G)}\chi(1) \leq \acd_p(G)\left(\sum_{i=0}^f n_1(I_i/N)+\sum_{i=0}^g
n_{i,p}\right).
\]
On the other hand,
\[
\sum_{\chi\in\Irr^*(G)}\chi(1) = \sum_{i=0}^f [G:I_i]n_1(I_i/N)+\sum_{i=0}^g [G:I_i]s_{i,p}\]
Since $s_{i,p}\geq pn_{i,p}$, it follows that
\[
\sum_{\chi\in\Irr^*(G)}
\chi(1)\geq\sum_{i=0}^f [G:I_i]n_1(I_i/N)+p\sum_{i=0}^g
[G:I_i]n_{i,p}
\]
Therefore, we obtain
\[
\acd_p(G)\left(\sum_{i=0}^f n_1(I_i/N)+\sum_{i=0}^g
n_{i,p}\right)\geq \sum_{i=0}^f [G:I_i]n_1(I_i/N)+p\sum_{i=0}^g
[G:I_i]n_{i,p}
\]
Using the hypothesis $\acd_p(G)\leq p$, we deduce that
\[
\acd_p(G)\sum_{i=0}^f n_1(I_i/N)\geq \sum_{i=0}^f [G:I_i]n_1(I_i/N),
\]
which is equivalent to
\begin{equation}\label{est1}
(\acd_p(G)-1)n_1(G/N)\geq \sum_{i=1}^f ([G:I_i]-\acd_p(G))n_1(I_i/N)
\end{equation}
since $I_0=G$.

Now, observe that $n_1(G/N)=[(G/N):(G/N)'|$ and
$n_1(I_i/N)=[(I_i/N):(I_i/N)'|$. Therefore $n_1(G/N)\leq
[G:I_i]n_1(I_i/N)$ for every $1\leq i\leq f$. Thus
\[
n_1(G/N)\leq \frac{1}{f} \sum_{i=1}^f [G:I_i]n_1(I_i/N).
\]
Together with \eqref{est1}, we deduce that
\[
\frac{\acd_p(G)-1}{f}\sum_{i=1}^f [G:I_i]n_1(I_i/N)\geq \sum_{i=1}^f
([G:I_j]-\acd_p(G))n_1(I_i/N).
\]
Therefore, there must exist some index $1\leq j\leq f$ so that
\[
\frac{\acd_p(G)-1}{f}[G:I_j]\geq [G:I_j]-\acd_p(G).
\]
This is equivalent to
\[
\frac{[G:I_j](f+1)}{[G:I_j]+f}\leq \acd_p(G),
\]
and the proof is complete.
\end{proof}

Finally we can prove the main theorem.

\begin{theorem}\label{theorem-main-1-again}
Let $p$ be a prime and $G$ a finite group. Let $\ell(p)$ be the smallest positive integer such that
$\ell(p)p+1$ is a prime power. If
$$\acd_p(G)< b_p:= \frac{2\ell(p)p}{\ell(p)p+1},$$
then $G$ has a normal Sylow $p$-subgroup.
\end{theorem}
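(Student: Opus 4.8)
The plan is to argue by contradiction with a minimal counterexample and to reduce to a split primitive configuration to which Lemma~\ref{lemma-orbit} applies. First, since $b_p<2\le p$ we have $\acd_p(G)\le p$, and since one checks $b_p<a_p$ for every $p$ (indeed $b_2=4/3<5/2$, $b_3=3/2<7/3$, and $b_p<2<(p+1)/2$ for $p\ge 5$), Theorem~\ref{theorem-main-2-again-2} shows $G$ is $p$-solvable. Let $G$ be a minimal counterexample. By Lemma~\ref{L-subset}, every $N\lhd G$ with $N\le G'$ satisfies $\acd_p(G/N)\le\acd_p(G)<b_p$, so minimality forces a normal Sylow $p$-subgroup in each such $G/N$. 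Choosing a minimal normal subgroup $V\le G'$ and analysing the cases, I would reduce to
\[
G=V\rtimes H,\qquad V\text{ an elementary abelian }r\text{-group},\ r\ne p,
\]
with $V$ a nontrivial irreducible $\FF_r H$-module and $H\cong G/V$ carrying a normal Sylow $p$-subgroup $Q$. Here the possibility that $V$ is a $p$-group (or a trivial module) is excluded because then $QV\lhd G$ would be a normal Sylow $p$-subgroup, while a nonabelian $V$ is excluded by re-running the extendibility argument of Theorem~\ref{theorem-main-2-again-2} (via Theorems~\ref{theorem-simple-groups-key1} and~\ref{theorem-simple-groups-key2}) to force $\acd_p(G)\ge a_p>b_p$.

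The heart of the argument is then a fixed-point-free action. Write $\hat V:=\Irr(V)$, a nontrivial irreducible $\FF_r H$-module dual to $V$. Since $G$ has no normal Sylow $p$-subgroup, $Q$ does not centralize $V$, so $Q$ acts nontrivially on $\hat V$ and $\bC_{\hat V}(Q)\ne\hat V$. Because $Q\lhd H$, the fixed space $\bC_{\hat V}(Q)$ is an $H$-submodule of $\hat V$, so by irreducibility it equals $0$. Thus $Q$ has no nonzero fixed points on $\hat V$, every $Q$-orbit on $\hat V\smallsetminus\{1_V\}$ has size a power of $p$ exceeding $1$, and hence every $H$-orbit on $\hat V\smallsetminus\{1_V\}$, being a disjoint union of such $Q$-orbits, has size divisible by $p$. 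In particular $p\mid|\hat V\smallsetminus\{1_V\}|=r^k-1$ (where $|V|=r^k$), so $r^k$ is a prime power $\equiv 1\pmod p$ and $r^k\ge \ell(p)p+1$ by the very definition of $\ell(p)$; equivalently $r^k-1\ge \ell(p)p$.

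Now I apply Lemma~\ref{lemma-orbit} to $G=V\rtimes H$. Since all nontrivial $H$-orbits have size divisible by $p$ (and none has size $1$), the integer $f$ is exactly the number of nontrivial $H$-orbits, the sizes $m_1,\dots,m_f$ are all divisible by $p$ with $\sum_i m_i=r^k-1$, and the lemma yields an index $j$ with $\frac{m_j(f+1)}{m_j+f}\le\acd_p(G)<b_p$. To reach a contradiction it suffices to show $\frac{m(f+1)}{m+f}\ge b_p$ for every orbit size $m$. Writing $t:=\ell(p)p$ and using that $x\mapsto\frac{x(f+1)}{x+f}$ is increasing, I split into two cases. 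If $f=1$ then $m_1=r^k-1\ge t$, so $\frac{2m_1}{m_1+1}\ge\frac{2t}{t+1}=b_p$. If $f\ge 2$ then, as $m_j\ge p$, we get $\frac{m_j(f+1)}{m_j+f}\ge\frac{p(f+1)}{p+f}\ge\frac{3p}{p+2}$, and the inequality $\frac{3p}{p+2}\ge\frac{2t}{t+1}$ is equivalent to $t(p-4)+3p\ge 0$, which holds for all $p\ge 5$ and, since $\ell(p)=1$ forces $t=p$ when $p\in\{2,3\}$, in those cases as well. Either way $\acd_p(G)\ge b_p$, contradicting the hypothesis, and the theorem follows.

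I expect the genuine obstacle to be the reduction in the first paragraph: showing that a minimal counterexample really is a split extension $V\rtimes H$ by a single elementary abelian $p'$-chief factor whose complement has a normal Sylow $p$-subgroup. Ruling out nonabelian chief factors reuses the delicate extendibility input (Theorems~\ref{theorem-simple-groups-key1} and~\ref{theorem-simple-groups-key2}) already needed for $p$-solvability, and securing the semidirect-product structure (equivalently, that $V$ is complemented and self-centralizing) requires the careful normal-subgroup bookkeeping developed for $p=2$ in \cite{Hung-Tiep}. By contrast, once that configuration is in place, the fixed-point-free action of the normal Sylow $p$-subgroup on $\hat V$ and the elementary inequality against $\ell(p)$ close the argument cleanly.
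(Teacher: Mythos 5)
Your proposal is correct and follows essentially the same route as the paper: reduce a minimal counterexample to a split extension $V\rtimes H$ with $V$ an elementary abelian $p'$-chief factor and $H\cong G/V$ carrying a normal Sylow $p$-subgroup, show that every $H$-orbit on $\Irr(V)\smallsetminus\{1_V\}$ has size divisible by $p$ (your fixed-point-free module argument and the paper's ``$[N,P_1]\leq\Ker(\lambda)$ forces $[N,P_1]=1$'' argument are two phrasings of the same step), and then feed this into Lemma~\ref{lemma-orbit} with the case split $f\geq 2$ (where $3p/(p+2)>b_p$) versus $f=1$ (where $|\mathcal{O}|=r^k-1\geq\ell(p)p$ by the definition of $\ell(p)$); your verification of the two inequalities is, if anything, more explicit than the paper's. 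The one substantive difference is in the reduction, where you under-use Theorem~\ref{theorem-main-2-again-2}: it yields solvability of $\bO^{p'}(G)$, not merely $p$-solvability, and the paper exploits this by choosing the minimal normal subgroup inside $G'\cap\bO^{p'}(G)$ (after disposing of the easy case $G'\cap\bO^{p'}(G)=1$), which is then automatically elementary abelian, so no nonabelian chief factor ever has to be excluded. Your proposed exclusion of a nonabelian $V$ by ``re-running'' the argument of Theorem~\ref{theorem-main-2-again-2} would need a small patch: that argument relies on $p\mid|G/\bC_G(N)|$, which came from $N\leq\bO^{p'}(G)$ and is not automatic for an arbitrary minimal normal $V\leq G'$; in the complementary case $p\nmid|G/\bC_G(V)|$ one instead notes that the preimage of the normal Sylow $p$-subgroup of $G/V$ is $P\times V$, so $P\lhd G$ directly. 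The remaining bookkeeping you defer to (complementation of $V$ via the Frattini argument and a maximal subgroup avoiding $V$, with the $V\leq\Phi(G)$ case handled separately) is exactly what the paper carries out.
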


\begin{proof}
Since the theorem has been proved for $p=2$ in
\cite[Theorem~1.1]{Hung-Tiep}, we assume from now on that $p$ is
odd. We will argue by induction on $|G|$. Let $G$ be a finite group
with $\acd_p(G)<b_p$. By Theorem~\ref{theorem-main-2-again-2},
we know that $\bO^{p'}(G)$ is solvable.

First we consider the case $G'\cap \bO^{p'}(G)$ is trivial. Then
$\bO^{p'}(G)$ can be viewed as a subgroup of $G/G'$, and thus it is
abelian. It follows that the Sylow $p$-subgroup of $\bO^{p'}(G)$ is
normal in $G$, and we are done.

From now on we will assume that $G'\cap \bO^{p'}(G)$ is nontrivial.
In particular, we can choose a minimal normal subgroup $N$ of $G$
that is inside both $G'$ and $\bO^{p'}(G)$. Observe that $N \cong C_r^m$ is
elementary abelian since $\bO^{p'}(G)$ is solvable. Furthermore,
$\acd_p(G/N)\leq\acd_p(G)<b_p$ by Lemma \ref{L-subset}.
The induction hypothesis now implies that $G/N$ has a
normal Sylow $p$-subgroup, say $P_1/N$.
If $N$ is a $p$-group then $P_1$ is a normal Sylow $p$-subgroup of
$G$, and we are done.

So we will assume that $N$ is an
elementary abelian $p'$-group, and so
$P_1=N\rtimes P$ for a Sylow
$p$-subgroup of $P_1$. By Frattini's argument we have
$G=P_1\bN_G(P)=N\bN_G(P)$. If $N$ is contained in the Frattini
subgroup $\Phi(G)$ of $G$, we would have $G=\bN_G(P)$, which means that $P
\unlhd G$, and we are done. So it remains to consider the case that
$N \not\leq \Phi(G)$. We then choose
a maximal subgroup $H$ of $G$ such that $N\nleq H$, so that
$G=NH$. Now $N\cap H \unlhd H$, and $N \cap H \unlhd N$ as $N$ is abelian. It follows that
$N\cap H\unlhd G$, and thus $N\cap H=1$ by the minimality of $N$. We conclude
that $G$ is a split extension of $N$ by $H$.

Recall $P_1 = N \rtimes P \unlhd G$. Suppose we can find a non-principal irreducible character $\lambda$ of $N$ that is $P_1$-invariant.
Then, for every $g\in P_1$
and $n\in N$, we have
\[
\lambda(n)=\lambda^g(n)=\lambda(gng^{-1}),
\]
and thus $\lambda([g,n])=1$. Since $\lambda$ is not the principal
character, we deduce that $[N,P_1] \leq \Ker(\lambda)$ is a proper subgroup of $N$. The
minimality of $N$ then implies that $[N,P_1]=1$ since $[N,P_1]\lhd G$.
This means that $P_1=N\times P$, and
we can again conclude that $P\unlhd G$.

We may now assume that every $P_1$-orbit, whence every $H$-orbit, on
$\Irr(N) \smallsetminus \{1_N\}$, has size divisible by $p$.
We now can apply Lemma~\ref{lemma-orbit} to deduce that, in the
action of $H$ on $\Irr(N)\smallsetminus \{1_N\}$, there is an orbit
$\mathcal{O}$ such that $p\mid |\mathcal{O}|$
and
\begin{equation}\label{est2}
  \frac{|\mathcal{O}|(f+1)}{|\mathcal{O}|+f}\leq \acd_p(G),
\end{equation}
where $f$ is the number of $H$-orbits on $\Irr(N) \smallsetminus \{1_N\}$.
If furthermore $f \geq 2$, then
$$\frac{|\OC|(f+1)}{|\OC|+f} \geq \frac{3p}{p+2} > b_p > \acd_p(G),$$
contradicting \eqref{est2}. Thus $f=1$, and so $|\OC| = |\Irr(N) \smallsetminus \{1_N\}| = r^m-1$.
By the definition of $\ell(p)$, we have $|\OC| \geq \ell(p)p$, and so
$$\frac{|\OC|(f+1)}{|\OC|+f} = \frac{2\ell(p)p}{\ell(p)p+1} = b_p > \acd_p(G),$$
again contradicting \eqref{est2}.
\end{proof}


\section{Proof of Theorems~\ref{theorem-simple-groups-key1} and
\ref{theorem-simple-groups-key2}}\label{section-last}

Note that Theorem \ref{theorem-simple-groups-key1} has been
established for $p=2$ in \cite{Hung-Tiep}, and $p$ must be odd in
Theorem \ref{theorem-simple-groups-key2}. We therefore assume that
$p> 2$ for the rest of the paper.

We first observe the following, which handles Theorem
\ref{theorem-simple-groups-key1} in several cases.

\begin{lemma}\label{proposition-Out-cyclic}
Theorem \ref{theorem-simple-groups-key1} holds if $\Out(S)$ is
cyclic.
\end{lemma}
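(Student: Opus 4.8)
The plan is to split the statement into two essentially independent facts and then recombine them: that cyclicity of $\Out(S)$ renders the extendibility condition \emph{automatic}, and that a non-abelian simple group of order divisible by $p$ always has an irreducible character of degree divisible by $p$.

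First I would dispose of extendibility. For any $\theta \in \Irr(S)$, characters are class functions and hence fixed by all inner automorphisms, so $\Inn(S) \cong S$ lies in $I_{\Aut(S)}(\theta)$. Consequently $I_{\Aut(S)}(\theta)/S$ embeds into $\Out(S) = \Aut(S)/\Inn(S)$ and is therefore cyclic by hypothesis. By definition $\theta$ is invariant in $I_{\Aut(S)}(\theta)$, and the quotient of this inertia group by $S$ is cyclic; the standard extension theorem for a $G$-invariant character across a cyclic quotient $G/N$ (Isaacs, \emph{Character Theory of Finite Groups}, Corollary~11.22) then guarantees that $\theta$ extends to $I_{\Aut(S)}(\theta)$. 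The important point is that this holds for \emph{every} $\theta \in \Irr(S)$, so no special choice is needed to meet the extendibility requirement.

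Next I would secure a character of the right degree. Applying the classical It\^{o}--Michler theorem to $S$ itself, it suffices to check that a Sylow $p$-subgroup $P$ of $S$ is not simultaneously abelian and normal. In fact $P$ is not even normal: were $P \lhd S$, simplicity together with $P \neq 1$ would force $P = S$, making $S$ a non-trivial simple $p$-group, hence $C_p$, which contradicts non-abelian simplicity. Thus the hypothesis of It\^{o}--Michler fails and some $\theta \in \Irr(S)$ has $p \mid \theta(1)$.

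Combining the two, this $\theta$ satisfies $p \mid \theta(1)$ and extends to $I_{\Aut(S)}(\theta)$, which is exactly the assertion of Theorem~\ref{theorem-simple-groups-key1}. I do not expect a genuine obstacle here: the only thing to verify carefully is that the inertia quotient is cyclic, which is immediate from $\Inn(S) \leq I_{\Aut(S)}(\theta)$. The argument is uniform and case-free, which is presumably why the cyclic-$\Out(S)$ situation is singled out as a lemma that clears away many simple groups (alternating groups, sporadic groups, and various groups of Lie type) before one confronts the harder cases where $\Out(S)$ is non-cyclic and a concrete character must be produced by hand.
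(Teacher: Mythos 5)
Your proposal is correct and follows essentially the same route as the paper: invoke the It\^{o}--Michler theorem to obtain $\theta \in \Irr(S)$ with $p \mid \theta(1)$, note that $I_{\Aut(S)}(\theta)/S$ embeds in the cyclic group $\Out(S)$, and apply Isaacs' Corollary~11.22 to extend $\theta$. The only difference is that you spell out why It\^{o}--Michler applies (a Sylow $p$-subgroup of a non-abelian simple group cannot be normal), a detail the paper leaves implicit.
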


\begin{proof}
By the Ito-Michler theorem, for every prime divisor $p$ of $|S|$,
there exists an irreducible character $\theta$ of $S$ with $p\mid
\theta(1)$. Since $I_{\Aut(S)}(\theta)/S$ is a subgroup of
$\Out(S)$, out hypothesis on $\Out(S)$ implies that
$I_{\Aut(S)}(\theta)/S$ is cyclic. \cite[Corollary 11.22]{Isaacs1}
in turns then implies that $\theta$ is extendible to
$I_{\Aut(S)}(\theta)$, as desired.
\end{proof}

Lemma \ref{proposition-Out-cyclic} implies
Theorem~\ref{theorem-simple-groups-key1} (at least) for the
alternating groups $\Al_n$ with $n\geq 7$ and all sporadic simple
groups, since their outer automorphism groups are
trivial or of order 2. For the same reason,
Theorem~\ref{theorem-simple-groups-key2} also holds for these simple groups.
Therefore we may henceforth assume that $S$ is a simple
group of Lie type, say in characteristic $r$. We will assume furthermore that $S\neq \Sp_4(2)'\cong \Al_6, \ta F_4(2)'$
as these cases can be confirmed easily using \cite{Atl1}.

\smallskip
To prove Theorem \ref{theorem-simple-groups-key1}, we will produce a
set of irreducible characters of $S$, each of which is extendible to
its inertia subgroup in $\Aut(S)$, such that the product of their
degrees is divisible by every prime divisor of $|S|$. According to
\cite{Feit}, the Steinberg character $\St$ of degree $|S|_r$ of $S$
extends to $\Aut(S)$. Therefore, we
only need to produce such a set of irreducible characters
of $S$ such that the product of their degrees is divisible by every
prime divisor of $|S|_{r'}$.

We remark that many characters in the required set that we are about
to construct are in fact real-valued or even strongly real (that is, of Frobenius-Schur indicator $1$).
We hope that this property will be useful in other applications.

\subsection{Theorem \ref{theorem-simple-groups-key1} for classical groups in odd
characteristic}\label{SS-odd}
Let $\mathcal{G}$ be a simple algebraic group of adjoint type
defined over a field of characteristic $r$ and
$F:\mathcal{G}\rightarrow \mathcal{G}$ a Steinberg endomorphism such
that $S=[G,G]$ for $G:=\mathcal{G}^F$. Let $(\GC^*,F^*)$ be
dual to $(\mathcal{G},F)$ and set $G^*:=\GC^{*F^*}$. According
to the Deligne-Lusztig theory \cite{Carter,Digne-Michel} on complex
representations of finite groups of Lie type, the set of irreducible
complex characters of $G$ is partitioned into rational series
$\mathcal{E}(G,(s))$, which are labeled by conjugacy classes $(s)$
of semisimple elements of $G^*$.
For any semisimple element $s\in G^*$, there is a bijection $\chi\mapsto\psi$ from $\mathcal{E}(G,(s))$
to $\mathcal{E}(\bC_{G^*}(s),(1))$ such that
\[
\chi(1)=[G^*:\bC_{G^*}(s)]_{r\p}\psi(1).
\]
Since $\GC^*$ is simply connected, $\bC_{\GC^*}(s)$ is connected, and
the character in $\mathcal{E}(G,(s))$ corresponding to the trivial
character of $\mathcal{E}(\bC_{G^*}(s),(1))$, denoted by $\chi_s$, is
called the semisimple character associated to $s$. The characters in
$\mathcal{E}(G,(1))$ are unipotent characters of $G$.
It is well known that the unipotent characters restricts irreducibly to $S$ and
thus we also call them unipotent characters of $S$. Unipotent
characters of finite groups of Lie type have been completely
described in \cite[Sections 13.8 and 13.9]{Carter} and we will use
these descriptions without further notice.


We will frequently use the following property of unipotent characters:

\begin{lemma}\label{lemma Malle 2} {\rm \cite[Theorem~2.4]{Malle}}
Let $S$ be a simple group of Lie type. Then every unipotent
character of $S$ extends to its inertia subgroup in $\Aut(S)$.
\end{lemma}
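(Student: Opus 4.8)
The plan is to analyse the extension problem through the structure of $A := \Aut(S)$. Identifying $\Inn(S)$ with $S$, recall that $A$ is generated by the inner-diagonal automorphisms, realised by the conjugation action of $G = \GC^F$ (with $\GC$ of adjoint type, so that $\bZ(G)=1$ and $G \cong \mathrm{Inndiag}(S) \lhd A$), together with field and graph automorphisms. Thus $S \lhd G \lhd A$, the quotient $G/S$ is the abelian group of diagonal automorphisms, and $E := A/G$ is the group of field-graph automorphisms. By the standard description of $\Out(S)$, the group $E$ is cyclic for every type except $A_n$ ($n \geq 2$), $D_n$ ($n \geq 5$) and $E_6$, where $E \cong C_f \times C_2$ with $C_f$ the field part and $C_2$ the graph involution, and $D_4$, where triality gives $E \cong C_f \times \Sy_3$.

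I would first dispose of the diagonal automorphisms. It is a standard fact of Deligne--Lusztig theory that each unipotent character of $G$ restricts irreducibly to $S = [G,G]$, so restriction is a bijection between the unipotent characters of $G$ and those of $S$. Hence $\chi$ extends to the unique unipotent character $\hat\chi \in \Irr(G)$ lying over it; and since any automorphism in $I := I_A(\chi)$ permutes the unipotent characters of $G$ above $\chi$ while there is only one such, $\hat\chi$ is automatically $I$-invariant. The problem is thereby reduced to extending the $I$-invariant character $\hat\chi$ from $G$ to $I$, which is governed by a single cohomology class in $H^2(Q, \CC^\times)$, where $Q := I/G \leq E$.

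When $Q$ is cyclic this obstruction vanishes and $\hat\chi$ extends by \cite[Corollary~11.22]{Isaacs1}, exactly as in Lemma~\ref{proposition-Out-cyclic}; this settles all types with $E$ cyclic as well as every unipotent character whose field-graph inertia quotient is cyclic. The remaining and genuinely difficult cases are those in which $Q$ contains a Klein four subgroup $C_2 \times C_2$: this happens precisely when a field part of even order is combined with the graph involution (types $A_n$, $D_{n \geq 5}$, $E_6$) or with the abelianisation of the triality group (type $D_4$). Here $H^2(Q, \CC^\times)$ has a nontrivial $2$-part, and crucially it is \emph{not} detected by restriction to the cyclic factors --- over each of those $\hat\chi$ does extend, by the cyclic case --- but by the cup product $H^1 \times H^1 \to H^2$. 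Concretely, the obstruction measures whether lifts of a field automorphism and of a graph automorphism to operators on a module affording $\hat\chi$ can be chosen to commute.

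This commuting question is where the real work lies, and I expect it to be the main obstacle; the two reductions above are formal. I would resolve it by an explicit realisation argument: choose a module affording $\hat\chi$ defined over a subfield on which the field automorphism acts through the Galois group, so that a field automorphism lifts to the Galois twist of the defining matrices, while the graph automorphism lifts to a fixed linear map implementing the diagram symmetry. Since field and graph automorphisms commute inside $A$ and the two lifts act on genuinely independent data --- Galois action on scalars versus the combinatorial symmetry of the Lusztig datum --- one can arrange the lifts to commute, forcing the cup-product class to vanish. The triality case is handled analogously, using $H^2(\Sy_3, \CC^\times) = 1$ so that only the interaction of $C_f$ with an involution in $\Sy_3$ must be controlled. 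Making this uniform rests on Lusztig's explicit description of how field and graph automorphisms act on unipotent characters, together with rationality information about the affording representations.
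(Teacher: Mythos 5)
The paper does not prove this lemma at all: it is quoted verbatim from Malle \cite[Theorem~2.4]{Malle}, so the only fair comparison is with Malle's own argument. Your two reductions are fine and standard: unipotent characters of the adjoint group $G$ restrict irreducibly to $S$ and form an $\Aut(S)$-stable set, so the unique unipotent character of $G$ over $\chi$ is $I_{\Aut(S)}(\chi)$-invariant, and the remaining obstruction is a class in $H^2(I/G,\CC^\times)$ which vanishes when $I/G$ is cyclic. You also correctly locate the crux in the Klein-four situations (field part of even order combined with a graph involution in types $A_n$, $D_n$, $E_6$, and the triality case for $D_4$).

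The gap is that at exactly this crux you assert the conclusion rather than prove it. The claim that ``one can arrange the lifts to commute'' because field and graph automorphisms ``act on genuinely independent data'' is not an argument: the obstruction class is precisely the commutator of the two lifts acting on the representation space, and there is no general principle forcing it to vanish (for non-unipotent characters of groups of Lie type, extensions to non-cyclic inertia quotients can and do fail). The proposed mechanism is also not meaningful as stated: $\hat\chi$ is a complex character of the finite group $G$, and a field automorphism of $S$ acts on the group, not on the matrix entries of a complex representation, so ``the Galois twist of the defining matrices'' does not define a lift. What actually makes the argument work in Malle's proof is specific information about unipotent characters: for principal-series characters one realizes the commuting lifts inside $\End_{G}(\Ind_B^G 1)$ via the Hecke algebra, whose basis and parameters are permuted compatibly by field and graph automorphisms, and the remaining (cuspidal and non-principal-series) unipotent characters are handled through multiplicity-one statements in Harish-Chandra/Lusztig induction from cuspidal pairs, again using Lusztig's explicit description of the automorphism action. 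Without an input of this kind your cup-product class has no reason to vanish, so the hard half of the theorem remains unproved in your sketch.
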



Let us also recall the following fact, which was established in \cite{Tiep}.

\begin{lemma}\label{lemma-TiepProp-5.1}
In the above notation, let $s$ be a real semisimple element of order
relatively prime to $\bZ(G^*)$. Then $\chi_s$ is real-valued and
restricts irreducibly to $S$. Furthermore, if $\chi_s(1)$ is odd,
then $\theta:=(\chi_s)_S$ is extendible to a (strongly real)
character of $I_{\Aut(S)}(\theta)$.
\end{lemma}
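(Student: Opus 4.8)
The plan is to prove the three assertions of the lemma---that $\chi_s$ is real-valued, that it restricts irreducibly to $S$, and that under the oddness hypothesis the restriction extends to a strongly real character of its inertia subgroup---largely separately, relying on standard properties of Lusztig series for the first two and on a reduction along the diagonal and field--graph directions for the last.

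First I would dispose of real-valuedness and irreducible restriction. Complex conjugation carries the rational series $\mathcal{E}(G,(s))$ onto $\mathcal{E}(G,(s^{-1}))$ and sends the semisimple character $\chi_s$ to $\chi_{s^{-1}}$; since $s$ is real we have $(s)=(s^{-1})$, so $\overline{\chi_s}=\chi_{s^{-1}}=\chi_s$. For the restriction, recall that because $\mathcal{G}$ is adjoint the linear characters of $G$ trivial on $S=[G,G]$ are parametrized by $\bZ(G^*)$, and tensoring $\chi_s$ by the linear character $\hat z$ attached to $z\in\bZ(G^*)$ yields $\chi_{sz}$. Since $G/S$ is abelian, $(\chi_s)_S$ is irreducible exactly when $\hat z\,\chi_s\neq\chi_s$, i.e. $(sz)\neq(s)$, for every $1\neq z\in\bZ(G^*)$. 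This is where the hypothesis enters: if $s$ were $G^*$-conjugate to $sz$ then $s$ and $sz$ would have equal order, whereas $z$ is central of order dividing $|\bZ(G^*)|$, hence coprime to $\ord(s)$, so $\ord(sz)=\ord(s)\ord(z)$ and $z=1$. Thus $\theta:=(\chi_s)_S\in\Irr(S)$ is well defined and, as the restriction of a real-valued character, is itself real-valued.

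The reality of the \emph{extension} I would split off as follows. Since $\theta$ is real-valued and $\theta(1)=\chi_s(1)$ is odd, its Frobenius--Schur indicator cannot equal $-1$ (indicator $-1$ forces even degree), so $\theta$ is strongly real to begin with. Granting an extension $\widetilde\theta\in\Irr(A)$ to $A:=I_{\Aut(S)}(\theta)$, any other extension is $\lambda\widetilde\theta$ with $\lambda\in\Irr(A/S)$, and $\overline{\widetilde\theta}=\nu\widetilde\theta$ for some linear $\nu$; then $\lambda\widetilde\theta$ is real-valued precisely when $\lambda^2=\nu$, and being real of odd degree it is automatically strongly real. So it remains to produce \emph{some} extension and to check that $\nu$ is a square, the latter being where the oddness of $\theta(1)$ must be invoked once more to kill the $2$-torsion obstruction.

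For the existence of an extension I would argue along two directions. The diagonal direction is automatic: $\chi_s$ is an honest character of $G$ restricting irreducibly to $S$, so it already extends $\theta$ across the inner and diagonal automorphisms induced by $G$; writing $B\leq A$ for the image of $G$, the restriction of $\chi_s$ extends $\theta$ to $B$, and $A/B$ embeds into the field--graph group $\Phi\rtimes\Gamma$ with $\Phi$ cyclic and $\Gamma$ of order at most $6$ (the triality group $\Sy_3$ occurring only for $D_4$). Over a cyclic inertia factor extendibility is free by \cite[Corollary~11.22]{Isaacs1}, and I would combine this with the known compatibility of field and graph automorphisms with Deligne--Lusztig series and with semisimple characters (in the spirit of Lemma~\ref{lemma Malle 2} and the work of Taylor and Malle) to extend $\chi_s$ to $G\rtimes\langle\sigma\rangle$ for a generating field or graph automorphism $\sigma$ fixing $\chi_s$. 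The main obstacle is precisely the non-cyclic field--graph configurations---most acutely when a field involution and a commuting graph involution both occur, so that the relevant Sylow $2$-subgroup of $A/B$ is a Klein four group whose nontrivial cohomology class is invisible on cyclic subgroups---together with the triality of $D_4$; these cases demand an explicit, type-by-type construction of compatible extensions rather than a purely cohomological argument, and they are also where the square-root condition $\lambda^2=\nu$ must be verified directly.
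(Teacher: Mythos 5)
Your treatment of the first two assertions is sound and is the standard argument: complex conjugation permutes the rational series via $s\mapsto s^{-1}$, so reality of $s$ gives $\overline{\chi_s}=\chi_s$; and the coprimality of $\ord(s)$ to $|\bZ(G^*)|$ rules out $s$ being conjugate to $sz$ for $1\neq z\in\bZ(G^*)$, whence no nontrivial linear character of $G/S$ fixes $\chi_s$ and $(\chi_s)_S$ is irreducible. The observation that a real-valued character of odd degree must have Frobenius--Schur indicator $+1$ is also correct, as is the Gallagher computation showing that a real extension exists exactly when the linear character $\nu$ with $\overline{\widetilde\theta}=\nu\widetilde\theta$ is a square in $\Irr(A/S)$.

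The gap is in the third assertion, which is the substantive content of the lemma and the reason the paper derives it entirely from \cite[Lemma 2.2 and Proposition 5.1]{Tiep} rather than arguing from scratch: you do not actually establish that $\theta$ extends to $I_{\Aut(S)}(\theta)$, nor that some extension is real. Your reduction correctly isolates where the difficulty lies --- extension over the subgroup induced by $G$ is automatic, and cyclic quotients are handled by \cite[Corollary 11.22]{Isaacs1} --- but you then explicitly defer the decisive cases: the non-cyclic Sylow $2$-subgroups of $I_{\Aut(S)}(\theta)/S$ that occur when a field involution and a graph involution both stabilize $\theta$, the triality configurations for type $D_4$, and the verification that $\nu$ is a square. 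These are precisely the points where the hypothesis that $\chi_s(1)$ is odd has to do real work (via determinantal and Sylow-theoretic arguments on $I_{\Aut(S)}(\theta)/S$), and they occupy the bulk of the proof of \cite[Proposition 5.1]{Tiep}. A proof that ends by noting that the remaining cases ``demand an explicit, type-by-type construction'' has reduced the statement, not proved it; as written, the proposal establishes the first sentence of the lemma but leaves the second unproved.
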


\begin{proof}
See \cite[Lemma 2.2 and Proposition 5.1]{Tiep}.
\end{proof}

\subsubsection{$S=\PSL_n(q)$}

A) First we consider $\PSL_2(q)$ with $q\geq 5$. As the alternating groups of degree $5$ and $6$ were already handled,
we assume that $q\neq 5,9$. Irreducible characters
of two-dimensional linear groups are well known, see \cite{White}
for instance. According to \cite[p.~8]{White}, when $q$ is odd,
irreducible characters of degrees $q\pm1$ of $\PSL_2(q)$ can be
labeled by:

\begin{enumerate}
\item[(i)] $\chi_i, 1\leq i\leq (q-3)/2$ and $i$ even, of degree $q+1$,
\item[(ii)] $\theta_j, 1\leq j\leq (q-1)/2$ and $j$ even, of degree
$q-1$,
\end{enumerate}
Here, for the reader's convenience, we use the same notation as in
\cite{White}. Let $\varphi$ be the field automorphism of order $f$
of $\PSL_2(q)$. Then, by \cite[Lemma~4.8]{White}, the character
$\chi_i\in\Irr(\PSL_2(q))$ is invariant under $\varphi^k$ where
$1\leq k\leq f$ if and only if $(p^f-1)\mid i(p^k-1)$ or
$(p^f-1)\mid i(p^k+1)$; and the character
$\theta_j\in\Irr(\PSL_2(q))$ is invariant under $\varphi^k$ if and
only if $(p^f+1)\mid j(p^k-1)$ or $(p^f+1)\mid j(p^k+1)$. It is then
routine to check that, when $q\neq 5,9$, the characters $\chi_2$ and
$\theta_2$ are not invariant under any field automorphism. It is
well known that every irreducible character of $\PSL_2(q)$ of degree
$q\pm 1$ is invariant under the diagonal automorphism. Thus,
\[
I_{\Aut(S)}(\chi_2)=I_{\Aut(S)}(\theta_2)=\PGL_2(q).
\]
As $\PGL_2(q)/S\cong C_2$, both $\chi_2$ and $\theta_2$ are
extendible to $\PGL_2(q)$ and we now have a required set
$\{\St_S,\chi_2, \theta_2\}$.

\medskip

B) Now we can suppose $n\geq 3$. Case IIa of the proof of
\cite[Proposition 4.7]{Marinelli-Tiep} produces a regular semisimple
element $s_1\in G^*$ such that the semisimple character $\chi_{s_1}\in
\Irr(G)$ satisfies $\theta_1=(\chi_{s_1})_S\in \Irr(S)$ and
$I_{\Aut(S)}(\theta_1)=G$. Moreover,
\[|\bC_{G^*}(s)|=(q^m-1)(q-1)^{n-m-1},\] where $m\in \{n,n-1\}$ is
chosen to be odd. We then have
\[\theta_1(1)=[G^*:\bC_{G^*}(s_1)]_{r'}=\frac{\prod_{i=2}^{n}(q^i-1)}{(q^m-1)(q-1)^{n-m-1}}.\]

Suppose that $n=3$. Consider the semisimple character
$\chi_{s_2}\in\Irr(\PGL_3(q))$ where $s_2\in G^*=\SL_3(q)$ is a
diagonal matrix with eigenvalues $-1,-1,1$. As $s_2$ is real
of order $2$ and $|\bZ(G^*)|=\gcd(3,q-1)$,
Lemma~\ref{lemma-TiepProp-5.1} implies that
$\theta_2:=(\chi_{s_2})_S$ is irreducible. Moreover,
\[
\theta_2(1)=[G^*:\bC_{G^*}(s_2)]_{r'}=\frac{(q^2-1)(q^3-1)}{(q-1)(q^2-1)}=q^2+q+1,
\]
which is odd. Lemma~\ref{lemma-TiepProp-5.1} again yields that
$\theta_2$ is extendible to $I_{\Aut(S)}(\theta_2)$.

Now suppose that $n\geq 4$. Then part (c) of the proof of \cite[Proposition 5.5]{NT1} yields a character
$\theta_2 \in \Irr(S)$ of degree
\[\theta_2(1)=\left\{\begin {array}{ll}
(q^n-1)(q^{n-1}-1)/(q-1)^2, & \text{if } q \equiv 1 (\bmod 4),\\
(q^n-1)(q^{n-1}-1)/(q^2-1), & \text{if } q \equiv 3 (\bmod 4),
 \end {array} \right.\]
such that $\theta_2$ is extendible to $\Aut(S)$. The set
$\{\St_S,\theta_1,\theta_2\}$ now satisfies our requirement for all
$n\geq 3$.

\subsubsection{$S=\PSU_n(q)$ with $n\geq 3$}
It was shown in part IIb of the proof of \cite[Proposition
4.7]{Marinelli-Tiep} that there is a regular semisimple element
$s_1\in G^*$ so that the semisimple character $\chi_{s_1}\in \Irr(G)$
satisfies the conditions $\theta_1=(\chi_{s_1})_S\in \Irr(S)$ and
$I_{\Aut(S)}(\theta_1)=G$. Moreover,
\[|\bC_{G^*}(s_1)|=(q^m+1)(q+1)^{n-m-1},\] where $m\in \{n,n-1\}$ is
chosen to be odd. We then have
\[\theta_1(1)=[G^*:\bC_{G^*}(s_1)]_{r'}=\frac{\prod_{i=2}^{n}(q^i-(-1)^i)}{(q^m+1)(q+1)^{n-m-1}}.\]

When $n=3$, a similar construction as in the $\PSL_3(q)$ case yields
a character $\theta_2\in\Irr(S)$ of degree $q^2-q+1$ and $\theta_2$
extends to $I_{\Aut(S)}(\theta_2)$, and we obtain the required set
$\{\St_S,\theta_1,\theta_2\}$.

So we can assume that $n\geq 4$. According to the proof of
\cite[Theorem 2.1]{Dolfi-Navarro-Tiep}, $\Aut(S)$ has a rank $3$
permutation character $\rho=1+\alpha+\beta$ such that $\alpha_S$ and
$\beta_S$ are both irreducible. Moreover,
\[\alpha(1)=\frac{q^2(q^n-(-1)^n)(q^{n-3}-(-1)^{n-3})}{(q+1)(q^2-1)}\]
and
\[\beta(1)=\frac{q^3(q^{n-1}-(-1)^{n-1})(q^{n-2}-(-1)^{n-2})}{(q+1)(q^2-1)}.\]
We now have the set $\{\theta_1,\alpha_S,\beta_S\}$ with desired properties.

\subsubsection{$S=\PSp_{2n}(q),~\Omega_{2n+1}(q)$ with $n\geq 2$.}

It was shown in the proof of \cite[Proposition 4.5]{Marinelli-Tiep}
that if $s$ is a semisimple simple element of $G^*$ of order a
{\it primitive prime divisor} of $r^{2nf}-1$ (see \cite{Zs} for the definition and
existence of such divisors), then the semisimple
character $\chi_s\in\Irr(G)$ restricts irreducibly to $S$ and
$I_{\Aut(S)}(\theta)=G$ with $\theta:=(\chi_s)_S$. Furthermore,
$|\bC_{G^*}(s)|=q^n+1$, which implies that
\[\theta(1)=[G^*:\bC_{G^*}(s)]_{r'}=(q^n-1)\prod_{i=1}^{n-1}(q^{2i}-1).\]

On the other hand, $S$ has a unipotent character $\chi$,
parametrized by the symbol $1\hspace{3pt} n \choose 0$, of degree

\[\chi(1)=\frac{q(q^n+1)(q^{n-1}-1)}{2(q-1)},\] see \cite[Corollary 3.2]{Nguyen}.
Lemma \ref{lemma Malle 2} asserts that $\chi$ is extendible to
$I_{\Aut(S)}(\chi)$ (which is in fact $\Aut(S)$ in this case). Now
the set $\{\theta,\chi\}$ fulfills our requirements.

\subsubsection{$S=P\Omega_{2n}^\pm(q)$ with $n\geq 4$ even.}

The case $S=P\Omega_{2n}^-(q)$ with $2|n$ can be handled
similarly as in the symplectic case with the note that the unipotent
character $\chi$ is parametrized by the symbol $1\hspace{3pt} n-1
\choose -$ and has degree
\[\chi(1)=\frac{q(q^n+1)(q^{n-2}-1)}{q^2-1},\] see \cite[Proposition 3.3]{Nguyen}.

Now we consider $S=P\Omega_{2n}^+(q)$ with $2|n$. When
$n\geq 6$, part 6.5 of the proof of \cite[Theorem 3.1]{Tiep}
produces characters $\theta_{1,2}\in\Irr(S)$ such that
$I_{\Aut(S)}(\theta_{1,2})=G$ and $\theta_{1,2}$ are extendible to
$G$. Furthermore
\[\theta_1(1)=\frac{(q^n-1)\prod_{i=1}^{n-1}(q^{2i}-1)}{(q^2+1)(q^{n-2}+1)},\]
and
\[\theta_2(1)=\left\{\begin {array}{ll}
\dfrac{(q^n-1)\prod_{i=1}^{n-1}(q^{2i}-1)}{(q+\varepsilon)(q^{n-1}+\varepsilon)}, & \text{if } q\geq 5,\\
\dfrac{(q^n-1)\prod_{i=1}^{n-1}(q^{2i}-1)}{26(3^{n-3}-1)}, &
\text{if } q =3,
 \end {array} \right.\]
where $q\equiv \varepsilon (\bmod 4)$. The set
$\{\St_S,\theta_1,\theta_2\}$ satisfies our requirements.

Finally we consider $S=P\Omega_8^+(q)$. This group has three
unipotent characters, parametrized by the symbols $1 \choose 3$,
$0\hspace{3pt}1\hspace{3pt} 2\hspace{3pt} \choose -$, and
$0\hspace{3pt} 2 \choose 1\hspace{3pt}3$ of degrees
\[q(q^2+1)^2, \frac{1}{2}q^3(q-1)^4(q^2+q+1), \text{ and }
\frac{1}{2}q^3(q+1)^4(q^2-q+1),
\]
respectively. These three characters form our desired set.

\subsubsection{$S=P\Omega_{2n}^\varepsilon(q)$ with $n\geq 5$ odd.}

As shown in part 6.7 of the proof of \cite[Theorem 3.1]{Tiep}, there
exists a character $\theta\in\Irr(S)$ which extends to a semisimple
character of $G$ of degree
\[\theta(1)=\Pi_{i=1}^{n-1}(q^{2i}-1).\] Moreover $I_{\Aut(S)}(\theta)=G$.

We have mentioned in the previous subsection that
$S=P\Omega_{2n}^-(q)$ has a unipotent character $\chi$ parametrized
by the symbol $1\hspace{3pt} n-1 \choose -$ and has degree
\[\chi(1)=\frac{q(q^n+1)(q^{n-2}-1)}{q^2-1}.\] On the other hand, $S=P\Omega_{2n}^+(q)$
has a unipotent character $\chi$ parametrized by the symbol $n-1
\choose 1$ of degree
\[\chi(1)=\frac{q(q^n-1)(q^{n-2}+1)}{q^2-1},\] see \cite[Proposition
3.4]{Nguyen}. We now have the set $\{\theta,\chi\}$ as required.

\subsection{Theorem \ref{theorem-simple-groups-key1} for classical groups in even characteristic}
In this subsection, $S$ is a simple classical group defined over a field $\FF_q$ in
characteristic $2$, with $q=2^f$. Departing from the viewpoint of $S$ as $[G,G]$ used in
\S\ref{SS-odd}, here we can find a simple simply connected
algebraic group $\HC$ and a Steinberg endomorphism $F:\mathcal{H}\rightarrow \mathcal{H}$
such that $S=H/\bZ(H)$ for $H:=\mathcal{H}^F$.
Let $(\mathcal{H}^*,F^*)$ be dual to $(\mathcal{H},F)$, and let
$H^*:=\mathcal{H}^{*F^*}$. The following lemma is a part of
\cite[Proposition 7.1]{Tiep}.

\begin{lemma}\label{lemma-even-characteristic}
In the above notation, assume that $s\in [H^*,H^*]$ is a real semisimple
element such that $\bC_{\GC^*}(s)$ is connected. Then the
semisimple character $\chi_s\in\Irr(H)$ is trivial at $\bZ(H)$ and
hence it can be viewed as a character of $S$. Moreover, it is
extendible to a (strongly real) character of its inertia subgroup in
$\Aut(S)$.
\end{lemma}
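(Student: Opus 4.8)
The plan is to prove the two assertions separately: first that $\chi_s$ is trivial on $\bZ(H)$, so that it descends to $S$, and then that the resulting character of $S$ extends, strongly really, to its inertia subgroup in $\Aut(S)$. The guiding principle throughout is that, since $\HC$ is simply connected, $\HC^*$ is of adjoint type and $\chi_s$ is the \emph{canonical} member of the Lusztig series $\EC(H,(s))$: it is the character corresponding under Jordan decomposition to the trivial character of the centralizer $\bC_{\HC^*}(s)$, which is connected by hypothesis and hence has its own well-defined unipotent characters. This canonicity is what will pin down both the central behaviour and the inertia subgroup of $\chi_s$.

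For the first assertion I would invoke the central-character formula for Lusztig series (see \cite{Digne-Michel}): the restriction of any $\chi\in\EC(H,(s))$ to $\bZ(H)$ is $\chi(1)\,\omega$, where $\omega\in\Irr(\bZ(H))$ is the linear character attached to the image of $s$ under the canonical pairing between $\bZ(H)$ and the abelianization $H^*/[H^*,H^*]$; in particular $\omega$ depends only on the coset $s[H^*,H^*]$ and is trivial as soon as that coset is trivial. Since $s\in[H^*,H^*]$ by hypothesis, $\omega=1_{\bZ(H)}$, so $\chi_s$ is trivial on $\bZ(H)$ and is therefore the inflation of an irreducible character of $S=H/\bZ(H)$.

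For the extendibility I would first identify the inertia subgroup $I:=I_{\Aut(S)}(\chi_s)$. The canonical characterization of $\chi_s$, together with the equivariance of Jordan decomposition under diagonal, field, and graph automorphisms, shows that $\chi_s$ is fixed by exactly those automorphisms that stabilize the class $(s)$ under the induced action on semisimple classes of the dual group; this determines $I$. To produce an actual extension to $I$, I would realize the field and graph automorphisms lying in $I$ as commuting Steinberg endomorphisms (respectively algebraic automorphisms) $\sigma$ of $\HC$, so that $\chi_s$ is stable under $\langle\sigma\rangle$ and extends across the corresponding semidirect product, while the diagonal part is controlled through the adjoint group $\GC^F$ acting on $\EC(H,(s))$. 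Assembling these extensions compatibly over the (metacyclic) quotient $I/S$ then yields an extension of $\chi_s$ to all of $I$. This is precisely the content of \cite[Proposition 7.1]{Tiep}, whose argument runs parallel to the odd-characteristic case recorded in Lemma~\ref{lemma-TiepProp-5.1}.

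Finally, for the reality statements: since $s$ is real, $s$ is $H^*$-conjugate to $s^{-1}$, and complex conjugation carries $\EC(H,(s))$ to $\EC(H,(s^{-1}))$ while preserving the Jordan correspondent, so $\overline{\chi_s}=\chi_s$ and $\chi_s$ is real-valued. To upgrade this to Frobenius--Schur indicator $+1$ one uses that in characteristic $2$ the semisimple character has odd degree $[H^*:\bC_{H^*}(s)]_{2'}$, which together with real-valuedness forces indicator $+1$, and one checks that the extension built above can be taken strongly real. I expect the main obstacle to be exactly this last step combined with the assembly of the extension: because $\Out(S)$ is typically non-cyclic for classical groups (involving both field and graph automorphisms), one cannot simply appeal to extendibility across cyclic overgroups as in Lemma~\ref{proposition-Out-cyclic}, and controlling the Frobenius--Schur indicator in characteristic $2$ is delicate; both demand the fine equivariance properties of Jordan decomposition and a careful case analysis, which is where the bulk of the work in \cite[Proposition 7.1]{Tiep} resides.
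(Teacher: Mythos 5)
Your proposal is correct and matches the paper's approach: the paper gives no independent argument but simply quotes this lemma as part of \cite[Proposition 7.1]{Tiep}, which is exactly the reference you identify as carrying the weight of the proof. Your sketch of the underlying mechanics (the central character $\omega_s$ being trivial since $s\in[H^*,H^*]$, equivariance of Jordan decomposition for the inertia subgroup, reality of $s$ plus odd degree in characteristic $2$ forcing Frobenius--Schur indicator $+1$) is a faithful outline of what that cited proposition establishes.
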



Note that $\Out(S)$ can be read off from \cite[Theorem 2.5.12]{Gorenstein-Lyons-Solomon}.
Due to Lemma \ref{proposition-Out-cyclic}, we only need to
consider the groups $\PSL_n(q)$ with $n\geq 3$, $\PSU_n(q)$ with
$n\geq 3$, $\Sp_4(q)$, and $\Omega_{2n}^+(q)$.

\subsubsection{$S=\PSL_n(q)$ with $n\geq 3$}

First we suppose $n=3$. As the case $S=\PSL_3(8)$ can be checked
easily using \cite{Atl1}, we assume that $2 < q = 2^f\neq 8$. By \cite{Zs}, we can then find a primitive prime
divisor $\ell > 3$ of $2^{2f}-1$. Choose a real semisimple element
$s\in\PSL_2(q)<[H^*,H^*]$ of order $\ell$ and consider the associated
semisimple character $\chi_s\in\Irr(\SL_3(q))$ of degree
\[
\chi_s(1)=[H^*:\bC_{H^*}(s)]_{2'}=q^3-1.
\]
By Lemma \ref{lemma-even-characteristic}, $\chi_s$ can be
viewed as a character of $S$ and it is extendible to its inertia
subgroup in $\Aut(S)$. We also note that $\PSL_3(q)$ has a unipotent
character $\chi^{(1,2)}$ of degree $q(q+1)$. The set
$\{\chi_s,\chi^{(1,2)}\}$ fulfills the requirements.

When $S=\PSL_6(2)$, we simply take the set of three unipotent
characters parametrized by the partitions $(1,5)$, $(2,4)$, and
$(1,2,3)$ of degrees $62, 588$, and $6480$, respectively. When
$S=\PSL_7(2)$, we take the set of three unipotent characters
parametrized by the partitions $(1,6)$, $(2,5)$, and $(1,1,5)$ of
degrees $126$, $2540$, and $5208$, respectively.

Now we may assume that $n\geq 4$ and $(n,q)\neq (6,2), (7,2)$. Choose
$m\in\{n-1,n\}$ to be even. The assumption of $n$ and $q$ implies by \cite{Zs}
that $2^{mf}-1$ has a primitive prime divisor, say $\ell$. Following
part 7.3 of the proof of \cite[Theorem 3.1]{Tiep}, we choose an
element $s\in\PSL_n(q)=[H^*,H^*]$ with a preimage of order $\ell$ in
$\Sp_m(q)<\SL_n(q)=L$. This element $s$ is real and
$\bC_\mathcal{G}(s)$ is connected, and therefore $\chi_s$, viewed as
a character of $S$, extends to its inertia subgroup in $\Aut(S)$.
Furthermore,
\[
\chi_s(1)=[H^*:\bC_{H^*}(s)]_{r'}=\frac{\prod_{i=2}^n(q^{i}-1)}{(q-1)^{n-m-1}(q^m-1)}.
\]
The required set of characters will be $\{\chi_s,
\chi^{(1,n-1)},\chi^{(2,n-2)}\}$, where $\chi^\alpha$ denotes the
unipotent character of $S$ parametrized by partition $\alpha \vdash n$. Note
that
\[
\chi^{(1,n-1)}(1)=\frac{q(q^{n-1}-1)}{q-1} \text{ and }
\chi^{(2,n-2)}(1)=\frac{q^2(q^{n}-1)(q^{n-3}-1)}{(q-1)(q^2-1)}.
\]

\subsubsection{$S=\PSU_n(q)$ with $n\geq 3$}

The case $S=\PSU_3(q)$ can be argued similarly as in the $\PSL_3(q)$
case: here we can find two irreducible characters of degrees $q^3+1$ and
$q(q-1)$ satisfying our conditions. So we suppose that $n\geq 4$.
Again as in the linear case, one can construct a real semisimple
element $s\in [H^*,H^*]=\PSU_n(q)$ with an inverse image in
$\Sp_m(q)<\SU_n(q)$ of order $\ell$, where $m\in\{n,n-1\}$ is even
and $\ell$ is a primitive prime divisor of $2^{mf}-1$ if $4\mid m$ and
a primitive prime divisor of $2^{mf/2}-1$ if $4\nmid m$. Then, as shown in part 7.3 of
the proof of \cite[Theorem 3.1]{Tiep}, the
semisimple character $\chi_s\in\Irr(S)$ extends to its inertia
subgroup in $\Aut(S)$, and
\[
\chi_s(1)=[H^*:\bC_{H^*}(s)]_{r'}=\frac{\prod_{i=2}^n(q^{i}-(-1)^i)}{(q+1)^{n-m-1}(q^m-1)}.
\]
This $\chi_s$ together with the unipotent characters
$\chi^{(1,n-1)},\chi^{(2,n-2)}$ of degrees
\[
\chi^{(1,n-1)}(1)=\frac{q(q^{n-1}-(-1)^{n-1})}{q+1} \text{ and }
\chi^{(2,n-2)}(1)=\frac{q^2(q^{n}-(-1)^n)(q^{n-3}-(-1)^{n-3})}{(q+1)(q^2-1)}.
\]
will form a required set.

\subsubsection{$S=\Sp_4(q)$ with $q\geq 4$} This group has three
unipotent characters parametrized by the symbols $0\hspace{3pt} 1
 \hspace{3pt}2\choose-$, $0\hspace{3pt} 2 \choose 1$, and $0\hspace{3pt} 1 \choose
 2$ of degrees
 \[
 \frac{1}{2}q(q-1)^2, \frac{1}{2}q(q+1)^2, \text{ and }
 \frac{1}{2}q(q^2+1),
 \]
respectively. It is easy to check that every prime divisor of $|S|$
divides at least one of these degrees. Note that $\Sp_4(2)' \cong \AAA_6$ was already considered before.

\subsubsection{$S=\Omega_{2n}^+(q)$ with $n\geq 4$}

As the case $S=\Omega_8^+(2)$ can be checked directly using
\cite{Atl1}, we assume that $(n,q)\neq (4,2)$. We then have $H\cong
S\cong H^*$. Let $\ell$ be a primitive prime divisor of $2^{(2n-2)f}-1$
and choose $s$ to be a real semisimple element of $G$ of order
$\ell$. This can be done since all semisimple elements of
$\Omega_{2n}^{\pm}(q)$ are real when $n$ is even, by \cite[Proposition
3.1]{Tiep-Zalesski}. When $n$ is odd, we just choose
$s_1\in\Omega_{2n-2}^-(q)<\Omega_{2n}^+(q)$. We now have a
semisimple character of $S$ of degree
\[
\chi_{s}(1)=[H^*:\bC_{H^*}(s)]_{r'}=\frac{(q^n-1)\prod_{i=1}^{n-1}(q^{2i}-1)}{(q+1)(q^{n-1}+1)},
\]
which is extendible to its inertia subgroup in $\Aut(S)$, by
Lemma~\ref{lemma-even-characteristic}. On the other hand, $S$ has a
unipotent character parametrized by
$0\hspace{3pt}1\choose1\hspace{3pt}n$ of degree
\[
\frac{q^{2n}-q^2}{q^2-1},
\]
see \cite[Proposition 3.4]{Nguyen}. This unipotent character and
$\chi_s$ above will satisfy our conditions.


\subsection{Theorem \ref{theorem-simple-groups-key1} for exceptional groups of Lie type}
By Lemma \ref{proposition-Out-cyclic}, we only need to consider
families with possible non-cyclic outer automorphism group, which
are $G_2(3^f)$, $F_4(2^f)$, $E_6(q)$, and $E_7(q)$. However, our
arguments below can be applied to all exceptional groups of Lie
type.

Unipotent characters of groups of exceptional type are well known.
It turns out that there are always unipotent characters of $S$ such
that the product of their degrees is divisible by $|S|$. To write
down their degrees simply, let us denote $\Phi_k$ the
$k^{\mathrm {th}}$ cyclotomic polynomial evaluated at $q$, and use the notation
in \cite[Section 13.9]{Carter}.

For $S=G_2(q)$ with $q=3^f$, the characters $\phi_{1,3'}$,
$G_2(\theta)$, and $\phi_{1,6}$ of degrees
$$\frac{1}{3}q\Phi_3\Phi_6, ~\frac{1}{3}\Phi_1^2\Phi_2^2,~q^6,$$
respectively, satisfy our requirement. For $S=F_4(q)$ with
$q=2^f$, the characters $\phi_{8,3'}$, $F_4[i]$, and $\phi_{1,24}$
of degrees
$$q^3\Phi_4^2\Phi_8\Phi_{12},~\frac{1}{4}q^4\Phi_1^4\Phi_2^4\Phi_3^2\Phi_6^2,~q^{24},$$
respectively, will do the job. For $S=E_6(q)$, we choose the
characters $\phi_{81,6}$, $\phi_{1,36}$, $E_6[\theta]$, and
$E_6[\theta^2]$ of degrees
$$q^6\Phi_3^3\Phi_6^2\Phi_9\Phi_{12},~q^{36},~\frac{1}{3}q^7\Phi_1^6\Phi_2^4\Phi_4^2\Phi_5\Phi_8,~
   \frac{1}{3}q^7\Phi_1^6\Phi_2^4\Phi_4^2\Phi_5\Phi_8,$$
respectively.
Finally, for $S=E_7(q)$, we choose the characters $\phi_{27,2}$,
$\phi_{189,5}$, $\phi_{1,63}$, $E_6[\theta]$, and $E_6[\theta^2]$ of
degrees
$$\begin{aligned}q^2\Phi_3^2\Phi_6^2\Phi_9\Phi_{12}\Phi_{18},~q^5\Phi_3^2\Phi_6^2\Phi_7\Phi_9\Phi_{12}\Phi_{14}\Phi_{18},
~q^{63},\\
\frac{1}{3}q^7\Phi_1^6\Phi_2^6\Phi_4^2\Phi_5\Phi_7\Phi_8\Phi_{10}\Phi_{14},
~\frac{1}{3}q^7\Phi_1^6\Phi_2^6\Phi_4^2\Phi_5\Phi_7\Phi_8\Phi_{10}\Phi_{14},\end{aligned}$$
respectively.

Theorem \ref{theorem-simple-groups-key1} is now completely proved.


\subsection{Proof of Theorem \ref{theorem-simple-groups-key2}}\label{SS-even}

Finally we prove Theorem \ref{theorem-simple-groups-key2}, which we restate:

\begin{theorem}
Let $p$ be a prime and let $S$ be a non-abelian simple group of
order not divisible by $p$. Then there exists a non-principal
character $\theta\in\Irr(S)$ such that $\theta$ is extendible to a
character of $I_{\Aut(S)}(\theta)$ and $p\nmid
|I_{\Aut(S)}(\theta)|$.
\end{theorem}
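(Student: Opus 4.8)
The plan is to produce, for each $S$, a \emph{single} non-principal $\theta\in\Irr(S)$ that is extendible to $I:=I_{\Aut(S)}(\theta)$ and satisfies $p\nmid|I|$. Since $I/S$ embeds into $\Out(S)$ and $p\nmid|S|$, the last requirement is equivalent to demanding that no non-trivial $p$-element of $\Aut(S)$ fix $\theta$. As noted after Lemma~\ref{proposition-Out-cyclic}, the alternating groups $\Al_n$ with $n\geq 7$ and the sporadic groups are already settled: there $|\Out(S)|\leq 2$ is coprime to the odd prime $p$, so $p\nmid|\Aut(S)|$ and any non-principal character (which extends to its inertia subgroup because $I/S$ is cyclic) will do. Hence I may assume $S$ is of Lie type in characteristic $r$, with $q=r^f$.

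My first step is to locate where $p$ can divide $|\Out(S)|$. Decomposing $\Out(S)$ into its diagonal, field and graph parts, I claim that for odd $p$ with $p\nmid|S|$ the condition $p\mid|\Out(S)|$ forces $p\mid f$. Indeed, a prime dividing $|\mathrm{Outdiag}(S)|$ divides $q-1$ (types $A_{n-1}$, $E_6$) or $q+1$ (types ${}^2\!A_{n-1}$, ${}^2\!E_6$), hence divides $|S|$, a contradiction; for the remaining types $|\mathrm{Outdiag}(S)|$ is a power of $2$. The graph automorphisms have order at most $2$ except for triality in type $D_4$, which is excluded because $3$ divides the order of every group of type $D_4$ in characteristic $\neq 3$. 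Consequently the only $p$-power automorphisms of $S$ are (graph-)field automorphisms, whose $p$-part is cyclic.

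If $p\nmid f$, then $p\nmid|\Out(S)|$, so $|\Aut(S)|$ is coprime to $p$; here I take $\theta=\St$, which is non-principal and extends to $\Aut(S)=I_{\Aut(S)}(\St)$ by \cite{Feit}, whence $p\nmid|I|$ automatically. The substantive case is $p\mid f$. Here the strategy is to exhibit a regular semisimple character $\chi_s$ whose inertia subgroup is contained in $\mathrm{Inndiag}(S)$; since $\mathrm{Inndiag}(S)$ has order coprime to $p$, this secures $p\nmid|I|$, and extendibility is furnished by Lemma~\ref{lemma-TiepProp-5.1} in odd characteristic and Lemma~\ref{lemma-even-characteristic} in even characteristic. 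For most classical groups the characters $\theta_1$ (respectively $\theta$) built in the proof of Theorem~\ref{theorem-simple-groups-key1} already satisfy $I_{\Aut(S)}(\theta_1)=G$, so they serve here verbatim once one observes $p\nmid|G|$. For the families where that proof used only unipotent characters — the exceptional groups $G_2(3^f)$, $F_4(2^f)$, $E_6(q)$, $E_7(q)$, together with the borderline cases $\mathrm{P\Omega}_8^+(q)$ and $\Sp_4(q)$ in characteristic $2$ — I instead choose $s$ of order a primitive prime divisor $\ell$ of $q^m-1$ (existence by \cite{Zs}) for $m$ adapted to the type. The point of this choice is that a field automorphism of order $p$ carries the class of $s$ to that of a Frobenius twist $s^{r^{f/p}}$, and the non-divisibility $\ell\nmid r^j-1$ for $j<fm$ prevents the two classes from coinciding, so no such automorphism fixes $\chi_s$.

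The main obstacle is precisely this $p\mid f$ case. The delicate points are: choosing $s$ to be simultaneously \emph{real} — so that Lemma~\ref{lemma-TiepProp-5.1} and Lemma~\ref{lemma-even-characteristic} apply and yield extendibility — and of primitive-prime-divisor order, which for the Suzuki and Ree families and for the exceptional groups requires care; ensuring $\bC_{\GC^*}(s)$ is connected, so that $\chi_s$ descends to a character of $S$ with the expected degree (a genuine constraint in even characteristic, where the simply connected picture of \S\ref{SS-even} makes $\GC^*$ adjoint); and verifying extendibility to the inertia subgroup when $\mathrm{Outdiag}(S)$ is non-cyclic, as happens for type $D_n$. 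Once these are arranged uniformly, the resulting regular semisimple character is non-principal, has inertia subgroup of order coprime to $p$, and extends to it, completing the proof.
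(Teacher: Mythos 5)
Your overall architecture is the same as the paper's: reduce to Lie type, observe that $p\nmid |S|$ kills the diagonal and graph parts of $\Out(S)$ so that $p\mid|\Out(S)|$ forces $p\mid f$, dispose of the case $p\nmid f$ with any extendible non-principal character (your choice of $\St$ is clean and correct), and for the classical groups reuse the semisimple characters with $I_{\Aut(S)}(\theta)=G$ already built for Theorem~\ref{theorem-simple-groups-key1}. All of that is fine and matches the paper, which handles odd $q$ exactly via the characters of \cite[Props.\ 4.4, 4.5, 4.7]{Marinelli-Tiep} with inertia group $G$.

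The gap is in the case $p\mid f$, which is where the entire content of the theorem lies. Your plan --- take $s$ real regular semisimple of primitive-prime-divisor order so that $\rho^{f/p}$ moves its class --- is the right kind of idea, but you do not carry it out: your closing paragraph lists the genuinely hard verifications (simultaneously arranging reality of $s$ and ppd order for the Suzuki, Ree and exceptional families; connectedness of $\bC_{\GC^*}(s)$ in even characteristic; extendibility over a non-cyclic $\mathrm{Outdiag}$ in type $D_n$) and then concludes with ``once these are arranged uniformly, \dots completing the proof,'' which assumes precisely what has to be proved. Note also that the list of families needing this construction is larger than the one you give: every exceptional type (including $\tw2 B_2$, $\tw2 G_2$, $\tw3 D_4$, $\tw2 F_4$, $E_8$, $\tw2 E_6$) and every even-characteristic classical group treated via unipotent characters requires it, since unipotent characters are invariant under field automorphisms and hence useless here when $p\mid f$. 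The paper closes exactly this gap by quoting \cite[Proposition 5.8]{Tiep}, which constructs, uniformly for all types, a real semisimple $s\in G^*$ of order coprime to $|\bZ(G^*)|$ whose $G^*$-class is not $\rho^{f/p}$-invariant; Lemma~\ref{lemma-TiepProp-5.1} and \cite[Proposition 5.1(iii)]{Tiep} then give irreducible restriction to $S$, strongly real extendibility to the inertia group, and $p\nmid[I_{\Aut(S)}(\theta):S]$. You should either invoke that result or supply the case-by-case construction you have only sketched.
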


\begin{proof}
We already mentioned above that the theorem is obvious for the alternating
groups and sporadic simple groups. Hence we will assume that $S$ is a simple
group of Lie type, defined over of field of
$q=r^f$ elements.

First we suppose that $q$ is odd. As in \S\ref{SS-odd}, we have $S=[G,G]$ where
$G=\mathcal{G}^F$ with $\mathcal{G}$ a simple algebraic group of
adjoint type defined over a field of characteristic $r$ and
$F:\mathcal{G}\rightarrow \mathcal{G}$ a Steinberg endomorphism. It has
already been shown in Subsection~\ref{SS-odd} that,
when $G$ is a classical group in odd characteristic, it possesses a
semisimple character $\chi_s$ where $s$ is a semisimple element of
the dual group $G^*$ such that $\theta:=(\chi_s)_S\in\Irr(S)$ and
$I_{\Aut(S)}(\theta)=G$. Indeed, the proofs of \cite[Propositions
4.4, 4.5, and 4.7]{Marinelli-Tiep} produced such a character for all
simple groups of Lie type in odd characteristic. So we are done when
$q$ is odd.

Now suppose that $q$ is even. Recall that the order of $\Out(S)$ is
$dfg$, where $d$ is the order of the group of diagonal
automorphisms, $f$ is the order of the cyclic group of field
automorphisms (generated by a Frobenius automorphism), and $g$ is
the order of the group of graph automorphisms coming from
automorphisms of the Dynkin diagram, see \cite[Theorem
2.5.12]{Gorenstein-Lyons-Solomon}. Furthermore, $|S|$ is always
divisible by $d$ and $g$. If $p \nmid f$, then we can consider any non-principal character
$\theta \in \Irr(S)$ that extends to $I_{\Aut(S)}(\theta)$ constructed in the proof of Theorem
\ref{theorem-simple-groups-key1} and observe that $p \nmid |I_{\Aut(S)}(\theta)|$.

So we may assume that $p\mid f$.
The proof of \cite[Proposition 5.8]{Tiep} constructed a (real)
semisimple element $s\in G^*$ of order coprime to $|\bZ(G^*)|$ such that
the $G^*$-conjugacy class of $s$ is not invariant under $\rho^{f/p}$,
where $\rho$ is a generator of the cyclic group of field
automorphisms. By Lemma~\ref{lemma-TiepProp-5.1} and \cite[Proposition 5.1(iii)]{Tiep}, the semisimple
character $\chi_s$ (of degree $[G^*:\bC_{G^*}(s)]_{2'}$, which is odd) then restricts
irreducibly to $S$, and $\theta := (\chi_s)_S$ extends to a (strongly real)
character of $I_{\Aut(S)}(\theta) = I_{\Aut(S)}(\chi_s)$. Moreover, as the
$G^*$-conjugacy class of $s$ is not invariant under $\rho^{f/p}$, we
have $p\nmid [I_{\Aut(S)}(\chi_s):S]$, and so the character
$\theta$ fulfills our requirements.
\end{proof}

\subsection{Final remarks} By inspecting the unipotent characters and their degrees
of Lie-type groups of low rank, it seems to us most of the prime
divisors of such a group $S$ divide the degree of a unipotent
character of $S$. For instance, if $S\neq \ta B_2(q)$ is a simple
group of exceptional type, every prime divisor of $|S|$ divides the
degree of a unipotent character of $S$, see \cite[\S 13.9]{Carter}.

For classical groups, the same assertion is not true for linear
groups, unitary groups, and non-split orthogonal groups in even
dimension. However, it looks plausible that, when $S$ is a symplectic group,
an orthogonal group in odd dimension, or a split orthogonal group
in even dimension, every prime divisor of $|S|$ divides the
degree of a unipotent character of $S$. It would be useful to
confirm this phenomenon, as it helps to conveniently establish
results on divisibility and extendibility of characters, like
Theorems~\ref{theorem-simple-groups-key1} and
\ref{theorem-simple-groups-key2}.



\end{document}